\patchcmd{\subsection}{-.5em}{.5em}{}{}
\newtheorem{theorem}{Theorem}[section]
\theoremstyle{plain}
\newtheorem{definition}[theorem]{Definition}
\newtheorem{lemma}[theorem]{Lemma}
\newtheorem{proposition}[theorem]{Proposition}
\newtheorem{remark}[theorem]{Remark}
\numberwithin{equation}{section}
\theoremstyle{plain}
\newcommand{\scal}{\operatorname{scal}}
\newcommand{\g}{\mathfrak{g}}
\newcommand{\re}{\mathbb{R}}
\begin{document}
\title[Paneitz type equations]{Global bifurcation for Paneitz type equations and constant Q-curvature metrics}

\author{ Jurgen Julio-Batalla}
\address{ Universidad Industrial de Santander, Carrera 27 calle 9, 680002, Bucaramanga, Santander, Colombia}
\email{ jajuliob@uis.edu.co}
\thanks{  The first author was partially supported by project 3756 of Vicerrectoría de Investigación y Extensión of Universidad Industrial de Santander}

\author{Jimmy Petean}\thanks{The second author was supported by grant A1-S-45886 of Fondo Sectorial  SEP-CONACYT.}  
\address{Centro de Investigaci\'{o}n en Matem\'{a}ticas, CIMAT, Calle Jalisco s/n, 36023 Guanajuato, Guanajuato, M\'{e}xico}
\email{jimmy@cimat.mx}

\begin{abstract} We consider 
the Paneitz-type equation $\Delta^2 u -\alpha \Delta u +\beta (u-u^q  ) =0$ on a closed Riemannian manifold
$(M,g)$. We reduce the equation
to a fourth-order ordinary differential equation assuming that $(M,g)$ admits a proper isoparametric function. Assuming
that $\alpha$ and $\beta$ are positive and $\alpha^2 >4\beta$, we prove that the global nonconstant solutions of this
ordinary differential equation only has nondegenerate critical points. Applying global bifurcation theory we prove multiplicity results
for positive solutions of the equation. As an application and motivation we prove multiplicity results for conformal constant $Q$-curvature metrics.
For example, consider 
closed positive Einstein manifolds $(M^n ,g )$ and $(X^m , h)$ of dimensions $n,m \geq 3$. Assuming
that $M$ admits a proper isoparametric function (with a symmetry condition) we prove that as $\delta >0$ gets closer to 0, the number 
of constant $Q$-curvature metrics conformal to $\g_{\delta} = g+\delta h$ goes to infinity.
\end{abstract}
\maketitle

\section{Introduction}

We consider a closed Riemannian manifold $(M^n ,g)$, $n\geq 3$. Let $\Delta= \Delta_g =div_g \nabla$ be the
 (non-positive)
Laplace operator. If $n>4$ let
$p^* = (n+4)/(n-4)$, while for $n=3$ and $n=4$ we let $p¨* = \infty$.
The {\it Paneitz type equation}, with $\alpha , \beta \in \re_{>0}$, $q>1$, 

\begin{equation}\label{Paneitz}
\Delta^2 u -\alpha \Delta u +\beta (u-u^q  ) =0
\end{equation}

\noindent
is called critical if $q=p^*$, subcritical if  $q < p^*$ and supercritical if  $q > p^*$. In this article we will study multiplicity results
for positive solutions of this equation using global bifurcation techniques.

Paneitz type equations  appeared in Riemannian geometry in the study of the $Q$-curvature in dimensions greater than 4. 
Important results on the critical case were obtained by Z. Djadli, E. Hebey and M. Ledoux in \cite{Hebey}.

The $Q$-curvature of a Riemannian manifold was introduced by S. Paneitz \cite{Paneitz} and T. Branson \cite{Branson}
in their study of conformally invariant operators. It is defined for Riemannian manifolds $(\mathbf{M}, \g)$
of dimension $N \geq 3$ by

\begin{equation}\label{Qcurv}
Q_{\g}:=-\frac{1}{2(N-1)} \Delta_{\g} \scal_{\g}-\frac{2}{(N-2)^{2}}\left| \mathrm{Ric}_{\g}  \right|^{2}+\frac{N^{3}-4 N^{2}+16 N-16}{8(N-1)^{2}(N-2)^{2}} \scal_{\g}^{2}.
\end{equation}

\noindent
where $\scal_{\g}$ is
the scalar curvature and $\mathrm{Ric}_{\g}$ is the Ricci tensor.

The treatment in the case when the dimension is 3 or 4 is different, so we will assume in the rest of this discussion that the dimension 
is  $N\geq 5$. 

Closely related to the $Q$-curvature is the Paneitz operator $P_{\g}$, defined using a local $\g$-orthonormal frame 
$\left(e_{i}\right)_{i=1}^{n}$, as
\begin{equation}\label{opPan}
P_{\g} \psi:=\Delta_{\g}^{2} \psi+\frac{4}{N-2} \mathrm{div}_{\g} \left(    \mathrm{Ric}_{\g} \left(\nabla \psi, e_{i}\right) e_{i}\right)-\frac{N^{2}-4 N+8}{2(N-1)(N-2)} \mathrm{div}_{\g}\left(\scal_{\g} \nabla \psi\right)+\frac{N-4}{2} Q_{\g} \psi ,
\end{equation}
where  $\mathrm{div}_{\g}$ is the divergence.

 We  write a  conformal metric $\g \in\left[\g_{0}\right]$ as $\g=u^{\frac{4}{N-4}} \g_{0}$, for a positive function $u: \mathbf{M} \rightarrow \mathbb{R}$. 
The Paneitz operator is conformally invariant in the sense that for any $\psi: \mathbf{M} \rightarrow \mathbb{R}$,
\begin{equation*}
P_{\g} \psi=u^{-\frac{N+4}{N-4}} P_{\g_{0}}(u \psi).
\end{equation*}
It follows that we can express the $Q$-curvature of $\g$ in terms of 	$\g_{0}$ and $u$ by:
\begin{equation*}
Q_{\g}=\frac{2}{N-4} P_{\g}(1)=\frac{2}{N-4} u^{-\frac{N+4}{N-4}} P_{\g_{0}}(u).
\end{equation*}

\medskip

There has been great interest in studying the problem of finding metrics of constant $Q$-curvature in a given conformal class.
In the 4-dimensional case important results were obtained for instance by S-Y. A. Chang and P. C. Yang \cite{Chang}, 
by S. Brendle \cite{Brendle} and by Z. Djadli and A. Malchiodi \cite{Djadli}. But, as we mentioned before, we will focus in
this article  in the case when the dimension is $N\geq 5$.

Note that it follows from the previous comments that finding a conformal metric $\g=u^{\frac{4}{N-4}} \g_{0}$ with constant $Q$-curvature 
is equivalent to finding a positive solution $u$ of the fourth order {\it Paneitz-Branson equation}:
\begin{equation}\label{ConstantQcurv}
P_{\g_{0}} u=\lambda u^{\frac{N+4}{N-4}}, \quad \lambda \in \mathbb{R} .
\end{equation}

The problem of  the existence of metrics of constant $Q$-curvature in dimension $N\geq 5$ has been studied, for instance, by
J. Qing and D. Raske in \cite{Qing},  by E. Hebey and F.  Robert in \cite{Hebey}, by
M. J. Gursky and A. Malchiodi  in \cite{Malchiodi},
by
F. Hang and P. C. Yang  in \cite{Yang}. See also  \cite{Djadli, Esposito, Gursky, Hang, Robert}.  

Note that Einstein metrics have constant $Q$-curvature.
J\'{e}r\^{o}me V\'{e}tois \cite{Vetois} proved that if $\g$ is an Einstein metric with positive scalar curvature,  
different from  the constant curvature metric on the sphere, then it is the only metric of constant $Q$-curvature in its conformal class (up to multiplication by a constant). 
Conformal diffeomorphisms of the round sphere,  $(\mathbb{S}^N, \g_0 $), give  a non-compact family of solutions to the  Paneitz-Branson equation. 
But all the conformal metrics are actually isometric. The compactness of the space of positive solutions of the Paneitz-Branson equation 
was studied for instance in \cite{Hebey2, Li, YanYanLi, Malchiodi2}. High dimensional examples for which the space of solution is not compact
were constructed by J.  Wei and C.  Zhao in \cite{Wei}.

There are a few other cases where one can show the non-uniqueness of conformal metrics of constant $Q$-curvature.
One case,  is that of Riemannian products.
If $(M,g)$ and $(X,h)$ are closed Einstein manifolds with positive Einstein constant, of dimension at least $3$ then
for any $\delta >0$, the Riemannian product $(M\times X , g+\delta h)$ has constant positive $Q$-curvature 
and constant positive scalar curvature. 
It is easy to compute that as $\delta \rightarrow 0$ or $\delta \rightarrow \infty$ the {\it energy} of the
product metric $g+\delta h$ goes to   $ \infty$. 
We are in the conditions of a result proved in  \cite{Yang}, which says that in the conformal class of $g+\delta h$, there must be at least one other metric of constant $Q$-curvature, with uniformly bounded energy.

R. Bettiol, P. Piccione and Y. Sire \cite{Bettiol} considered appropriate Riemannian submersions and  reduce the equation to basic functions (functions which are constant along the fibers of the submersion). 
Using bifurcation theory for this reduced equation on the canonical variation obtained by varying the size of the fibers
(under certain conditions the total spaces of these variations have constant Q-curvature), they prove the existence of bifurcation instants, giving the existence of metrics in the family for  which there are other conformal metrics of constant $Q$-curvature.

\medskip

In the present article we will give multiplicity results for conformal constant $Q$-curvature metrics using global bifurcation theory.
These multiplicity results will come as a corollary of multiplicity results for the Paneitz-type Equation (\ref{Paneitz}) on a closed
Riemannian manifold $(M,g)$.. 

We will assume that $(M,g)$ admits a proper isoparametric function $F:M \rightarrow [t_0 , t_1 ]$. We recall that
$F$ being isoparametric means that both $\Delta F$ and $\| \nabla F \|^2$ are constant along the level sets of $F$.
And the isoparametric function $F$ is called proper if its level sets are connected. The only critical 
levels of $F$ are the minimum
and the maximum, which are submanifolds and are called the focal submanifolds. We will restrict Equation
(\ref{Paneitz}) to functions which are constant along the level sets of $F$.
Denoting by ${\bf d}$ the distance to
the focal submanifold $F^{-1} (t_0 )$, $D$ the distance between the focal submanifolds
$F^{-1} (t_0 ) $ and $F^{-1} (t_1 )$,   functions which are constant along the level sets of $F$ are expressed in the
form $u=\varphi \circ {\bf d}$ for some smooth function $\varphi : [0,D]  \rightarrow \re$. Details will be given 
in Section 2 , where we will show  that $u$ solves the equation (\ref{Paneitz}) if $\varphi$ solves the fourth order ordinary 
differential equation 

\begin{equation}\label{PaneitzODE}
\varphi  '''' + 2h \varphi  ''' +(2h' +h^2 -\alpha ) \varphi '' +(h''+hh ' -\alpha h )\varphi  ' + \beta (\varphi - \varphi^ q) =0.
\end{equation}

\noindent
where the function $h:(0,D) \rightarrow \re$ is the mean curvature of the (isoparametric) hypersurface ${\bf d}^{-1} (t)$. We
have that $\lim_{t \rightarrow 0} h(t) = +\infty$ and $\lim_{t \rightarrow D} h(t) = -\infty$. 
To obtain a well defined solution on $M$, $\varphi$ must be  defined in the interval $[0,D]$, $\varphi '  (0) = \varphi ''' (0) = \varphi ' (D) = \varphi ''' (D) =0$. 

Let $\lambda_i$, $0  > \lambda_1 >  \lambda_2 ...$ be the negative eigenvalues of the second order operator $L(v)= v'' + hv$
(which corresponds to the Laplacian on $F$-invariant functions).  In Section 5,  see Theorem 5.2 and Remark 5.3, we will prove:

\begin{theorem}Assume that the function $h$ is antisymmetric with respect to $D/2$. Assume that $\alpha, \beta >0$,
 and that $\alpha^2 >4\beta$. If $(1/2) (\alpha   - \sqrt{\alpha^2 + 4\beta  (q-1)} < - \lambda_i$  then Equation (\ref{PaneitzODE})
has at least $i$ different positive solutions: for each integer $k$, $1\leq k \leq i$ it has at least one positive solution with exactly $k+1$ critical points. 
\end{theorem}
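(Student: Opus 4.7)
The plan is to apply Rabinowitz's global bifurcation theorem to \eqref{PaneitzODE}, with $\beta$ as the bifurcation parameter along the branch of constant solutions $\varphi\equiv 1$. Writing $\mathcal{L}v:=v''+hv'$ (the Laplacian on $F$-invariant functions), the principal part of \eqref{PaneitzODE} equals $\mathcal{L}^{2}-\alpha \mathcal{L}$; working in the space $X$ of $C^{4}$ functions on $[0,D]$ satisfying the four Neumann-type conditions $\varphi'(0)=\varphi'''(0)=\varphi'(D)=\varphi'''(D)=0$, inverting this principal part recasts the equation as $\varphi=K(\beta,\varphi)$ with $K$ compact. The linearization at $(\beta,1)$ is $\mathcal{L}^{2}v-\alpha \mathcal{L}v-\beta(q-1)v$, and if $\psi_{j}$ is the $j$-th eigenfunction of $\mathcal{L}$ with eigenvalue $\lambda_{j}<0$, the kernel is one-dimensional precisely at $\beta=\beta_{j}:=\lambda_{j}(\lambda_{j}-\alpha)/(q-1)>0$. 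Since $\lambda\mapsto\lambda(\lambda-\alpha)$ is strictly decreasing on $(-\infty,0)$, the bifurcation values $\beta_{1}<\beta_{2}<\ldots$ are distinct, and a short computation (solving for $\beta$ in $\lambda_i=(\alpha-\sqrt{\alpha^{2}+4\beta(q-1)})/2$) shows that the hypothesis of the theorem translates to $\beta>\beta_{i}$.

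The crossing is transversal because $\partial_{\beta}F|_{(\beta_{j},1)}[v]=-(q-1)v$ does not lie in the range on the kernel, so Rabinowitz's theorem produces, for each $j$, a connected global continuum $\mathcal{C}_{j}\subset\re_{>0}\times X$ of nontrivial solutions bifurcating from $(\beta_{j},1)$. To separate the $\mathcal{C}_{j}$ globally I invoke Theorem~5.2: under $\alpha^{2}>4\beta$ every nonconstant solution of \eqref{PaneitzODE} has only nondegenerate critical points, so the number of critical points on $[0,D]$ is a locally constant (hence constant) integer along any connected family of nonconstant solutions. Near $(\beta_{j},1)$ a solution looks like $1+\varepsilon\psi_{j}+o(\varepsilon)$, and Sturm oscillation theory gives $\psi_{j}$ exactly $j+1$ critical points on $[0,D]$ counting the endpoints. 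Transporting this count globally along $\mathcal{C}_{j}$ shows the continua are pairwise disjoint and that no $\mathcal{C}_{j}$ can return to the trivial branch at any $\beta_{k}\neq\beta_{j}$.

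The antisymmetry of $h$ about $D/2$ makes \eqref{PaneitzODE} equivariant under $\varphi(t)\mapsto\varphi(D-t)$, which fixes the trivial branch and each $\psi_{j}$, so $\mathcal{C}_{j}$ stays in an invariant sector. Under the hypothesis $\alpha^{2}>4\beta$ the principal symbol factors as $\mathcal{L}^{2}-\alpha \mathcal{L}+\beta=(\mathcal{L}-r_{+})(\mathcal{L}-r_{-})$ with $r_{\pm}=(\alpha\pm\sqrt{\alpha^{2}-4\beta})/2>0$, so its inverse is positivity-preserving; together with the nondegeneracy this keeps $\mathcal{C}_{j}$ strictly inside the positive cone. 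By Rabinowitz's alternative each $\mathcal{C}_{j}$ is unbounded in $\re_{>0}\times X$, and because collisions with the trivial branch at other $\beta_{k}$ are excluded, the unboundedness must occur along the $\beta$-direction. Therefore, for any $\beta>\beta_{i}$, each $\mathcal{C}_{j}$ with $1\leq j\leq i$ intersects the slice $\{\beta\}\times X$ in at least one positive solution with exactly $j+1$ critical points.

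The main obstacle is the third paragraph: upgrading the local nodal and positivity picture at each $(\beta_{j},1)$ into a global statement on the whole continuum $\mathcal{C}_{j}$, and ruling out the branches of Rabinowitz's dichotomy in which $\mathcal{C}_{j}$ becomes a bounded loop or escapes the positive cone. The nondegeneracy from $\alpha^{2}>4\beta$ turns the critical-point count into a discrete invariant, the factorization into two positive second-order operators supplies the fourth-order positivity, and the antisymmetry of $h$ provides the equivariant a priori bounds; the delicate task is to combine these three ingredients consistently within the Rabinowitz framework to force each $\mathcal{C}_{j}$, $j\leq i$, to project onto the whole interval $(\beta_{j},\infty)$ in the parameter.
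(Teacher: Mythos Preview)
Your overall strategy---Rabinowitz global bifurcation, the factorization $(\mathcal{L}-r_+)(\mathcal{L}-r_-)$ for positivity, and the nondegeneracy of critical points as a discrete invariant separating the branches---matches the paper's. The gap is in your choice of bifurcation parameter and in the source of your a~priori bounds.

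By fixing $\alpha$ and varying $\beta$, the region where your key tools are valid, namely $\alpha^2\ge 4\beta$, is the bounded interval $(0,\alpha^2/4]$ in $\beta$. The nondegeneracy of critical points (Section~3 of the paper) and the factorization you use for positivity both require $\alpha^2\ge 4\beta$; once the continuum $\mathcal{C}_j$ crosses $\beta=\alpha^2/4$ you lose the invariant that keeps the branches disjoint, so you cannot rule out that two branches merge there, nor can you ``transport the count globally along $\mathcal{C}_j$''. Separately, your claim that ``the unboundedness must occur along the $\beta$-direction'' needs a priori $C^4$ bounds on solutions over bounded $\beta$-ranges. You attribute these to ``the antisymmetry of $h$'', but the antisymmetry is used only to get the reflection symmetry $\varphi(t)\mapsto\varphi(D-t)$ needed in the nondegeneracy argument; it does not give compactness. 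The paper obtains compactness by a subcritical blow-up argument (Theorem~\ref{Compact}), which requires $q<p^*$---a hypothesis you never invoke. You also do not rule out the branch drifting toward $\beta\to 0$; the paper's uniqueness result (Theorem~\ref{CloseToZero}) is for small $\alpha$, not for small $\beta$ at fixed $\alpha$.

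The paper's remedy is exactly to avoid freezing $\alpha$: it chooses a one-parameter path $s\mapsto(\mathbf a(s),\mathbf b(s))$ inside the region $\{\alpha^2>4\beta\}$ with $\mathbf a,\mathbf b\to 0$ as $s\to 0$ and $\mathbf a,\mathbf b\to\infty$ as $s\to\infty$ (e.g.\ $\mathbf a(s)=s$, $\mathbf b(s)=cs^2$ with $c=\beta/\alpha^2<1/4$, hitting the given $(\alpha,\beta)$ at $s=\alpha$). Along this path the constraint $\alpha^2>4\beta$ holds for all $s$, so the critical-point count is a genuine global invariant on every $S_j$; Theorem~\ref{CloseToZero} gives a lower bound $s\ge\varepsilon$ on each branch; and Theorem~\ref{Compact} forces any branch with bounded $s$-projection to be compact. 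Rabinowitz then yields that the $s$-projection of each $S_j$ covers $[s_j,\infty)$, which is the missing step in your argument.
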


The result will be obtained by  applying the global bifurcation theorem of P. Rabinowitz \cite{Rabinowitz}. We will consider 
positive, increasing, functions $\alpha (s)$, $\beta (s) : (0,\infty) \rightarrow (0,\infty)$, satisfying $\alpha^2 (s) > 4 \beta (s)$,
$\lim_{s\rightarrow 0} \alpha (s) =0$ and $lim_{s\rightarrow \infty} \alpha (s) = \infty$. 
We will also assume that the function $\phi (s) =(1/2) (\alpha   - \sqrt{\alpha^2 + 4\beta  (q-1)}$ es decreciente,
$\lim_{s\rightarrow \infty} =-\infty$. 
Note that for any positive
numbers $\alpha, \beta$ as in the theorem, there are functions $\alpha , \beta$ as before such that for some $s_0 \in
(0,\infty )$ we have that $\alpha (s_0 ) = \alpha$ and $\beta (s_0 ) = \beta$. Then we study bifurcation of the
equation $B(\varphi  ,s)=0$, where 

$$B(\varphi , s)= \varphi  '''' + 2h \varphi  ''' +(2h' +h^2 -\alpha (s)  ) \varphi '' +(h''+hh ' -\alpha (s) h )\varphi  ' + \beta (s)  (\varphi - \varphi^ q),$$

\noindent
from the family of trivial solutions $\{ (1,s) \}$. The linearized equation at the trivial solution $(1,s)$  is 

$$A(v)= D_{\varphi} B[(1,s)]  (v)=
L^2 v -\alpha (s)L v+\beta (s)(1-q)v=0  .$$

If $v_i$ is a $\lambda_i$-eigenvector of $L$ then $A(v_i ) = \left(  \lambda_i^2 -\alpha (s) \lambda_i +\beta (s) (1-q) \right) v_i$. It
follows that the linearized equation at $(1,s)$ has a nontrivial solution if and only if there exists
$i$ such that $ \lambda_i^2 -\alpha (s) \lambda_i = \beta (s) (q-1)$. Therefore the bifurcation points for the equation are
the points $(1,s_i )$ such that $(1/2) (\alpha (s_i )   - \sqrt{\alpha^2 (s_i )+ 4\beta (s_i ) (q-1)} =  \lambda_i$. The
bifurcation points are simple and using the classical result of M. G. Crandall and P. Rabinowitz \cite{Crandall} about bifurcation from simple
eigenvalues we have that the family of nontrivial solutions around $(1,s_i )$ is given by a path $x\mapsto (\varphi (x) ,s(x) )$,
with $\varphi (x) = 1+xv_i +o(x^2 )$.  Let $S_i$ be the connected component, in the closure of the space of nontrivial solutions, which
contains the path. To study  $S_i$ we will  prove in Section 4  that there are no positive solutions 
of Equation (\ref{Paneitz} for $\alpha, \beta $ close to zero:

\begin{theorem}
There exists $\alpha_0 >0$ such that  if $ \alpha$, $0<\alpha < \alpha_0$, $\alpha^2 -4\beta >0$ and $q<p^*$ then the 
only positive solution of Equation \ref{Paneitz} is the constant solution 1.

\end{theorem}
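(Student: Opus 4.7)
The plan is to argue by contradiction using a compactness plus implicit function theorem scheme. Suppose there exist parameter sequences $(\alpha_k,\beta_k)\to(0,0)$ with $\alpha_k^2>4\beta_k$ and nonconstant positive solutions $u_k$ of Equation~(\ref{Paneitz}) with parameters $(\alpha_k,\beta_k)$. The first and main step would be to establish a uniform a priori bound $\|u_k\|_{L^\infty(M)}\le C$. Because $q<p^*$ the equation is subcritical, and a standard blow-up analysis at a maximum point, with scaling chosen so that both $\Delta^2$ and the nonlinearity $u^q$ survive in the limit, should produce a nontrivial bounded nonnegative entire solution of $\Delta^2 v = c\,v^q$ on $\mathbb{R}^n$, contradicting Liouville-type nonexistence in the subcritical regime. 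The hypothesis $\alpha^2>4\beta$ forces $\beta\le\alpha^2/4$, so both parameters go to zero together at a controlled rate and the intermediate $\alpha\Delta$ and $\beta u$ terms vanish in the rescaled limit.

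Once the $L^\infty$ bound is in hand, $L^p$ and Schauder regularity for $\Delta^2$ upgrade it to a uniform $C^{4,\gamma}$ bound, and Arzel\`a--Ascoli yields a subsequence with $u_k\to u_\infty$ in $C^4$. Letting $k\to\infty$ in~(\ref{Paneitz}) gives $\Delta^2 u_\infty=0$ on the closed manifold $M$, so $u_\infty\equiv c$ is a nonnegative constant. Integrating~(\ref{Paneitz}) against $1$ yields the necessary identity $\int_M u_k=\int_M u_k^q$, which forces $\max u_k\ge 1$ (otherwise $u_k^q<u_k$ pointwise since $q>1$, contradicting the identity). Taking the limit gives $c\ge 1$; combined with $c=c^q$ this forces $c=1$, so $u_k\to 1$ in $C^{4,\gamma}$.

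The final step is to contradict this via the implicit function theorem at $u\equiv 1$. The linearization of~(\ref{Paneitz}) there is $L_{\alpha,\beta}=\Delta^2-\alpha\Delta+\beta(1-q)$, with eigenvalues $\nu_i^2+\alpha\nu_i+\beta(1-q)$, where $0=\nu_0<\nu_1\le\dots$ are the eigenvalues of $-\Delta$. On the mean-zero subspace ($i\ge 1$) these are bounded below by $\nu_1^2-\beta(q-1)>0$ for small $\beta$, uniformly in $(\alpha,\beta)$. A Lyapunov--Schmidt reduction splitting $u=\lambda+w$ with $\int_M w=0$ then solves the mean-zero equation for $w=w(\lambda;\alpha,\beta)$ uniquely by IFT, with neighborhood of size uniform in $(\alpha,\beta)$, and reduces the problem to the scalar equation $\lambda=\mathrm{mean}_M\bigl((\lambda+w(\lambda))^q\bigr)$, whose derivative at $(\lambda,w)=(1,0)$ equals $1-q\neq 0$. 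Hence $u\equiv 1$ is the only solution in some $C^{4,\gamma}$-neighborhood, uniform in small $(\alpha,\beta)$. Since $u_k\to 1$, for large $k$ we conclude $u_k\equiv 1$, contradicting nonconstancy.

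The principal obstacle is the uniform $L^\infty$ bound in Step 1: the blow-up is classical for a fixed subcritical Paneitz equation, but here it has to be carried out uniformly as $(\alpha,\beta)\to(0,0)$, which is exactly where the assumption $\alpha^2>4\beta$ is used to keep the intermediate terms of lower order after the natural subcritical rescaling. The remaining steps (regularity, identification of the limit, and the Lyapunov--Schmidt/IFT contradiction) are then routine.
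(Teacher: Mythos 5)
Your Step 1 --- the uniform $L^\infty$ bound, which you yourself single out as the principal obstacle --- has a genuine gap, and the reason you give for uniformity (that $\alpha^2>4\beta$ keeps the intermediate terms lower order) does not address the real difficulty. In Equation (\ref{Paneitz}) the nonlinearity carries the factor $\beta=\beta_k\to 0$. The rescaling that makes $\Delta^2$ and the nonlinearity balance at unit amplitude around a maximum point has length scale $\mu_k=(\beta_k\|u_k\|_\infty^{q-1})^{-1/4}$, and this tends to $0$ (so that the rescaled balls exhaust $\mathbb{R}^n$ and a nontrivial entire solution of $\Delta^2 W=W^q$ appears in the limit) only if $\beta_k\|u_k\|_\infty^{q-1}\to\infty$. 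Since $\beta_k\to 0$, this is \emph{not} implied by $\|u_k\|_\infty\to\infty$: in the regime where $\beta_k\|u_k\|_\infty^{q-1}$ stays bounded the rescaling does not concentrate, no entire solution on $\mathbb{R}^n$ is produced, and the Liouville theorems of Lin and Wei--Xu give no contradiction. So the a priori bound, as argued, fails precisely in the degenerate regime created by $\beta\to 0$.

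This is exactly the degeneration the paper's Lemma 4.1 (the Licois--V\'eron eigenvalue argument, applied after renormalizing the equation to $\Delta^2 u-\alpha\Delta u+\beta u=u^q$) is there to control: it shows $q\|u\|_\infty^{q-1}\ge \mu_1(0)$ for nonconstant positive solutions, i.e. in your normalization $q\,\beta_k\|u_k\|_\infty^{q-1}\ge\mu_1(0)$, and the paper then splits into the bounded and unbounded cases, handling the bounded case by passing to the limit in the sup-normalized equation and integrating $\Delta^2 w=a^{q-1}w^q$ over $M$. Your scheme can be repaired without the eigenvalue lemma, but an argument must be added for the missing regime: when $\beta_k\|u_k\|_\infty^{q-1}$ is bounded, set $v_k=u_k/\|u_k\|_\infty$, use the splitting of the operator (or Schauder theory for $\Delta^2$) to extract a limit $v\ge 0$ with $\|v\|_\infty=1$ solving $\Delta^2 v=c\,v^q$ on $M$, where $c=\lim \beta_k\|u_k\|_\infty^{q-1}\ge 0$; if $c>0$, integration over $M$ forces $v\equiv 0$, a contradiction, while if $c=0$ then $v\equiv 1$ and the identity $\int_M u_k=\int_M u_k^q$, i.e. $\int_M v_k=\|u_k\|_\infty^{q-1}\int_M v_k^q$, fails for large $k$. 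With that added, your remaining steps (compactness, identification of the limit constant as $1$ via the integral identity, and a Lyapunov--Schmidt/IFT uniqueness near $u\equiv 1$ uniform in small $(\alpha,\beta)$, where care is needed because the zero-mode eigenvalue $\beta(1-q)$ of the linearization tends to $0$) constitute a genuinely different, and plausible, replacement for the paper's Lemma 4.1 route; but as written, Step 1 is incomplete.
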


Next we prove that all the solutions in the connected components $S_i$  are positive functions. This follows from a maximum principle, a
non-negative solution of Equation (\ref{PaneitzODE}) is either the constant 0 or it is  strictly positive. Maximum principles are not available in general
for fourth order equations, but there are results regarding the Paneitz-Branson equation. M. Gursky and A. Malchiodi proved a maximum
principle for the Paneitz-Branson equation under certain positivity conditions \cite{Malchiodi}.
A maximum principle for the Paneitz-type Equation (\ref{Paneitz}) was applied by Z. Djadli, E. Hebey and M. Ledoux in \cite{Hebey}
by expressing the fourth order operator with constant coefficients 
as the product of two appropriate second order operators. For the equation (\ref{PaneitzODE}) the
argument works in the same way, we will do it in Section 3. Then we have to prove that the connected components coming
from different bifurcation points do not intersect. For this we will prove that the number of critical points remains constant in 
any of the connected components. To prove this we will prove in Section 3 that nontrivial positive solutions of Equation
(\ref{PaneitzODE}) only have nondegenerate critical points. This again is not true in general for fourth order equations and it will be proved in Section 3
assuming that $h$ is antisymmetric around $D/2$.

Then it follows from the global bifurcation theorem of P. Rabinowiz that the connected component is not compact. For $q<p^*$ it is proved
in Section 4 that
the space of solutions of the equation with bounds on $\alpha, \beta$ is compact. Then the multiplicity result  follows from standard arguments.

\medskip

From the discussion above we can obtain multiplicity results for constant $Q$-curvature metrics. We will discuss now the case of 
Riemannian products of positive Einstein metrics.

Consider the Riemannian product $(M^n \times X^m, \g =g+h)$, where $g$ is Einstein with positive Einstein constant $\Lambda_0$ and $h$ is
Einstein with positive Einstein constant $\Lambda_1$.  Let $N=n+m$. Then $\g = g+h$ has constant $Q$-curvature 

$$ Q_{\g}= \frac{-2}{(N-2)^2} \left( n\Lambda_0^2  + m\Lambda_1^2  \right)  + 
\frac{N^3 -4N^2 +16N-16}{8(N-1)^2 (N-2)^2} \left( n\Lambda_0 + m \Lambda_1 \right)^2 .$$

We have that $Q_{\g} >0$ if $m,n \geq 3$ (see the computation in \cite[Lemma 2.2]{Alarcon}). 

In this case the Paneitz-Branson Equation (\ref{ConstantQcurv}) for a function on $M$  is

\begin{equation}\label{PaneitzTypeEquation}
\Delta_g^2 u -\alpha \Delta_g u +\beta (u-u^{\frac{N+4}{N-4}} ) =0
\end{equation}

\noindent
with $\alpha =\frac{N^2-4N+8}{2(N-1)(N-2)} (n\Lambda_0 + m\Lambda_1 ) -  \frac{4}{N-2} \Lambda_0$ and $\beta =\frac{N-4}{2} Q_{\g}$.
Note that $\alpha , \beta >0$. We can also check that $\alpha^2 -4\beta >0$ if $\Lambda_1 \geq \Lambda_0$. This is an elementary 
computation: we give the details in the Appendix at the end of the article, for the convenience of the reader.

It is needed that at least one of the factors admits a proper isoparametric function. The simplest situation is when the Riemannian manifolds
admits an isometric cohomogeneity one action. We require that the quotient space of the action is an interval $[0,D]$ (it is a circle the
manifold will have infinite fundamental group, and could not admit a positive Ricci curvature metric). The antisymmetric condition on $h$ 
is fulfilled if there is an extra isometry of $(M,g)$ which sends the t fiber to the D-t fiber, for every, $t \in [0,D/2]$. There are certainly 
plenty of examples. We will focus on isoparametric functions on the sphere. In this case one has many isometric actions as required
and also isoparametric functions which do not come from cohomogeneity one isometric actions. We will give more details in Section 2. 
Then we consider the family $s\in (1,\infty ) \mapsto \g_s = g+s^{-1} h$. Assuming we have an appropriate isoparametric function on 
$(M,g)$  we reduce the Paneitz-Branson Equation (\ref{ConstantQcurv}) for $\g_s$ to the ordinary differential equation
(\ref{PaneitzODE}). Then Theorem 1.1 implies that the number of  constant $Q$-curvature metrics conformal to $\g_s$ goes to
infinity as $s\rightarrow \infty$.

\medskip

To give a feeling of the multiplicity result obtained for conformal constant $Q$-curvature metrics we
will focus now  on the product of $({\mathbb S}^3 \times {\mathbb S}^3, g_0 + g_0 ) $, 
with $g_0$ the round metric with curvature 1. 
On ${\mathbb S}^3$ there are two (families of) isoparametric functions, corresponding to isometric actions
of $O(3)$ and of $O(2) \times O(2)$. ${\mathbb S}^3$ is of course Einstein with Einstein constant 2. The product $g_0 + g_0$ does not admit any
other conformal constant $Q$-curvature metric by the result of J. Vetois \cite{Vetois} mentioned above. We consider the products 
$g_0 + s^{-1}  g_0$, with $s \in (1, \infty )$. Then $\Lambda_1 = s \Lambda_0$ and we are in the situation described above. We
will build solutions which depend on the first factor (which would be the {\it large} factor). Then $\alpha (s)= 3s+1$ and
$\beta (s) = -(3/2)(1+s^2 ) + (171/100) (1+s)^2$. 
In the case of ${\mathbb S}^3$ the Laplace eigenvalues are $\mu_i =-i(i+2)$. 
We consider the  values $s_i$ such that $-i(i+2)= (1/2) \left( \alpha (s_i) - \sqrt{\alpha^2 (s_i ) +16 \beta (s_i ) (q-1)} \right)$ 
($s_i$ of course can be obtained explicitly).
For an $O(3)$-invariant isoparametric function the invariant eigenvalues are the same, $\lambda_i = \mu_i$. For an $O(2) \times O(2)$-invariant
isoparametric function the invariant eigenfunctions are $\lambda_i = \mu_{2i}$. Then Theorem 1.1 implies that for $s\in (s_{2i} ,s_{2i+1}]$ there
are $3i$ metrics of constant $Q$-curvature metrics conformal to $\g_s$; $2i$ of the corresponding conformal factors are $O(3)$-invariant and
$i$ of them are $O(2) \times O(2)$-invariant. For $s\in (s_{2i +1} ,s_{2i+2}]$ there are at least $3i+1$ metrics of
constant $Q$-curvature conformal to $\g_s$.

\section{Paneitz-type equations and isoparametric functions}

\noindent
Let ($M^n,g$) be a closed connected Riemannian manifold of dimension $n$.
Recall that a function $F:M\rightarrow[t_0,t_1]$  is called  {\it isoparametric} if  $|\nabla F|^2=b(F)$, $\Delta F=a(F)$ for 
some smooth functions 
$a,b$ defined on an interval containing $[t_0 , t_1 ]$. 
Isoparametric functions on general Riemannian manifolds were first considered by Q-M. Wang in
\cite{Wang}, following the classical theory of Cartan, Segre and others in the case of space forms
(see for instance \cite{Cartan, FKM1981, Civita, m1, m2, OT1975, Segre}). 

We will recall now a few known results about isoparametric functions. We are interested in writing the
Paneitz-type Equation (\ref{Paneitz}) for functions which are constant along the level sets of an isoparametric function,
which amounts to compute the Laplacian on such functions. Explicit details were shown in \cite[Section 2]{AJJ},
where the reader can find appropriate references. 

\medskip

Given an isoparametric function, with the corresponding functions $a$ and $b$,
it is known that the only zeroes of the function  $b:[t_0,t_1]\rightarrow [0,\infty)$ are $t_0$ and $t_1$,  which 
means that the only critical values of $F$ are its minimum and its
maximum (see \cite{Wang}). It is also proved in \cite{Wang} that the critical 
level sets, $M_0 = F^{-1} (t_0 )$ and
$M_1 = F^{-1} (t_1 )$ are smooth submanifolds; they are called the {\it focal submanifolds}
of $F$. We let $d_i$ be  the dimension of $M_i$. We will assume that
$d_0 , d_1 \leq n-2$. In this case the isoparametric function $F$ is called {\it proper}  and all the level sets
$M_t = F^{-1} (t)$ are connected (as proved in \cite[Proposition 2.1]{Tang}). It is also known that 
for any $t\in (t_0 , t_1 )$ the
level set $M_t$ is a tube  around any of the focal submanifolds: all points in $M_t$ are at 
the same distance to $M_0$ (and at the same distance to $M_1$).  

Let $D= d_g (M_0 , M_1 )$, and  ${\bf d} : M \rightarrow [0,D]$,
${\bf d} (x) =d_g(M_0 ,x)$. The function ${\bf d}$ is continuous, and smooth away from $M_0$ and $M_1$.




It is easy to see (see \cite[Section 2]{AJJ} for some details) that for any $c,d \in [t_0 , t_1 ], c<d,$

\begin{equation*}
d_g(M_c,M_d)=\int_{c}^{d}\dfrac{1}{\sqrt{b(t)}} dt,
\end{equation*}

\noindent
and, in particular,

\begin{equation*}
D=\int_{t_0}^{t_1}\dfrac{1}{\sqrt{b(t)}} dt.
\end{equation*}

\medskip

We will consider functions which are constant on the level sets of $F$:

\begin{definition}
A function $u:M\rightarrow \mathbb{R}$ is called $F$-invariant if $u(x)=\phi({\bf d}(x))$ for some  function $\phi:[0,D ]\rightarrow \mathbb{R}$.
\end{definition}

An $F$-invariant function $u$ is obviously determined by the corresponding function $\phi$. 
The function $u$ is smooth on $M- M_0  \cup M_1$ if and only if the corresponding function $\phi$ is
smooth on $(0,D)$; and it can be extended smoothly to $M$ if $\phi$ can be extended smoothly to 
$[0,D]$ with all the odd derivatives at $0$ and $D$ vanishing (in the same way as smooth radial functions on
Euclidean space are identified with smooth functions on $[0,\infty )$ with odd derivatives vanishing at 0).

We will denote by $\mathcal{B}_2 = \{ \phi \in C^{2,\alpha} ([0,D] ): \phi '(0) =0 = \phi '(D) \}$and
by $\mathcal{B}_4 = \{ \phi \in C^{4,\alpha} ([0,D] ): \phi '(0)  =  \phi ''' (0)=0 = \phi '(D)  = \phi '''(D)\}$.

\begin{lemma}
If we denote by
$C^{2,\alpha}_F (M)$ the set of $C^{2,\alpha}$ functions on $M$ which are $F$-invariant and by
$C^{4,\alpha}_F (M)$ the set of $C^{4,\alpha}$ functions on $M$ which are $F$-invariant, then the
application $\phi \mapsto u(x)=\phi({\bf d}(x))$ identifies $\mathcal{B}_2$ with $C^{2,\alpha}_F (M)$
and $\mathcal{B}_4$ with $C^{4,\alpha}_F (M)$.
\end{lemma}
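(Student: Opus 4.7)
The plan is to reduce the problem to the classical characterization of smoothness for radial functions on Euclidean balls. The point is that $F$-invariance combined with the tubular neighborhood structure around each focal submanifold forces an $F$-invariant function to be radial in the normal fiber, and the prescribed boundary conditions on $\phi$ are exactly those that make a radial function smooth across the origin of the fiber.

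First I would dispose of the easy direction and the interior behavior. If $\phi \in \mathcal{B}_2$ (respectively $\mathcal{B}_4$), then on $M \setminus (M_0 \cup M_1)$ the distance function ${\bf d}$ is smooth, so $u = \phi \circ {\bf d}$ inherits $C^{2,\alpha}$ (respectively $C^{4,\alpha}$) regularity on this open set from $\phi$; conversely, if $u \in C^{k,\alpha}_F(M)$, then $\phi = u \circ \gamma$, where $\gamma:[0,D]\to M$ is a minimizing geodesic from $M_0$ to $M_1$ parametrized by arclength, is immediately $C^{k,\alpha}$ on $(0,D)$. What remains is to analyze the behavior at the endpoints $t=0$ and $t=D$, i.e.\ at the focal submanifolds.

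The key step is the analysis near each focal submanifold $M_i$. The tubular neighborhood theorem provides a diffeomorphism from a neighborhood $U$ of the zero section of the normal bundle $\nu(M_i) \to M_i$ onto a neighborhood of $M_i$ in $M$, realized by the normal exponential map. Under this identification, a point $(p,v) \in \nu(M_i)$ with $|v|$ small satisfies ${\bf d}(\exp_p v) = |v|$ (for $M_0$; for $M_1$ one uses $D - |v|$). Thus in a local trivialization $U \cong W \times B^{n-d_i}$, with $W \subset M_i$ open and $B^{n-d_i}$ a small ball in the fiber (here $n - d_i \geq 2$ by properness), the $F$-invariant function $u$ reads $\tilde u(p,v) = \phi(|v|)$, i.e.\ it is independent of $p$ and radial in $v$.

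The final step invokes the classical fact that a radial function $v \mapsto \phi(|v|)$ on a ball in $\mathbb{R}^k$ with $k \geq 2$ belongs to $C^{2,\alpha}$ if and only if $\phi$ extends to an even $C^{2,\alpha}$ function across $0$, equivalently $\phi \in C^{2,\alpha}([0,\varepsilon))$ with $\phi'(0)=0$; and belongs to $C^{4,\alpha}$ if and only if $\phi \in C^{4,\alpha}([0,\varepsilon))$ with $\phi'(0)=\phi'''(0)=0$. Applying this at both $M_0$ and $M_1$ gives exactly the boundary conditions defining $\mathcal{B}_2$ and $\mathcal{B}_4$. The main obstacle, though well known, is this last equivalence for radial functions: the sufficiency uses Taylor expansion and the evenness of $|v|^{2j}$, while the necessity requires differentiating $\phi(|v|)$ along transverse directions and extracting the odd-order derivatives of $\phi$ from angular averages; since the normal codimension is at least $2$, there is genuine angular freedom and the odd derivatives are forced to vanish. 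Combining the regularity on $M \setminus (M_0 \cup M_1)$ with the tubular analyses near $M_0$ and $M_1$ yields the claimed bijections.
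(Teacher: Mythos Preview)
Your proposal is correct and follows essentially the same approach as the paper, which in fact does not prove the lemma but merely remarks that it ``is well known, it follows from the previous comments'' and refers to \cite[Section~2]{AJJ}; the ``previous comments'' are precisely the reduction to radial functions in the normal fibers of the focal submanifolds that you spell out. Your write-up is in fact more detailed than what the paper offers, and correctly identifies the radial-function characterization (vanishing odd derivatives at the origin) as the heart of the matter.
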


The lemma is well known, it follows from the previous comments. Some details are given in \cite[Section 2]{AJJ}.

For $t\in (0,D)$ we let $h(t)$ be the mean curvature of the hypersurface ${\bf d}^{-1} (t)$. Then for any
$x\in M- M_0 \cup M_1$ we have the expression 

$$h({\bf d} (x)) =\frac{1}{2\sqrt{b}}(-b'+2a) (f(x) ) = \Delta ({\bf d} (x) $$

\noindent
(see for instance  \cite[page 165]{Tang}, \cite[Section 2]{AJJ}, for details).

Then for $u= \phi \circ {\bf d}$ and $x \in M-M_0 \cup M_1$ we have:

\begin{equation}
\Delta (u) (x)= \Delta(\phi({\bf d}(x) = \phi^{''}  (  {\bf d}(x) ) +  \phi'  ({\bf d}(x) )  h({\bf d} (x) ) = (\phi '' + \phi ' h)({\bf d} (x) ).
\end{equation}

\medskip

The function $h$ is clearly smooth in $(0,D)$. The asymptotic behaviour of the mean curvature close to focal submanifolds
is also easy to check since the hypersurfaces are the normal geodesic spheres around the focal submanifolds. 
In \cite[Corollary 2.2]{Ge}, for instance, the authors study the power series expansion for $h$ close to a focal submanifold.
We have:

\begin{lemma}\label{mean-curvature}

$$\lim_{t\rightarrow 0}  t   \ h(t) = n-d_0 -1 \ \ \ ,  \ \ \ \ \lim_{t\rightarrow D} (t-D )   \ h(t) = n-d_1 -1. $$

\end{lemma}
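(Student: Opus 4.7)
The plan is to reduce the statement to a standard Riemannian-geometric computation of the mean curvature of a geodesic tube around a submanifold. Since the level set $M_{t}={\bf d}^{-1}(t)$ is, for small $t>0$, exactly the geodesic tube of radius $t$ around the focal submanifold $M_{0}$, and for $t$ close to $D$ the geodesic tube of radius $D-t$ around $M_{1}$, both limits reduce to the classical fact that the mean curvature of such a tube around a submanifold $\Sigma^{d}\subset M^{n}$ equals $(n-d-1)/t + O(1)$ as $t\to 0$ (with respect to the outward normal from $\Sigma$).

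For the first limit, fix $p\in M_{0}$ and introduce Fermi coordinates $(y,x)$ in a tubular neighborhood of $p$, where $y=(y^{1},\dots,y^{d_{0}})$ parametrizes a neighborhood of $p$ in $M_{0}$ and $x$ is a normal vector to $M_{0}$ at $y$; the corresponding point of $M$ is $\exp_{y}(x)$. Writing $x=t\theta$ with $t=|x|$ and $\theta\in S^{n-d_{0}-1}$, one has ${\bf d}(\exp_{y}(t\theta))=t$ and, by the Gauss lemma, $\nabla{\bf d}=\partial_{t}$. The standard Fermi-coordinate expansion gives
$$g = dt^{2}+t^{2}\sigma+g_{M_{0}}(y)+O(t),$$
where $\sigma$ denotes the round metric on $S^{n-d_{0}-1}$, and consequently $\sqrt{\det g}=t^{\,n-d_{0}-1}\,J(y,\theta)\bigl(1+O(t)\bigr)$ for some positive smooth function $J$. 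Since $h({\bf d}(x))=\Delta{\bf d}(x)=\partial_{t}\log\sqrt{\det g}$, we obtain $h(t)=(n-d_{0}-1)/t+O(1)$, from which the first limit is immediate.

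The second limit follows by the symmetric argument. Setting ${\bf d}_{1}(x)=d_{g}(M_{1},x)$, one has ${\bf d}+{\bf d}_{1}=D$ on $M\setminus (M_{0}\cup M_{1})$, so $\nabla{\bf d}=-\nabla{\bf d}_{1}$ and hence $\Delta{\bf d}=-\Delta{\bf d}_{1}$. Applying the previous computation to the tube around $M_{1}$ yields $\Delta{\bf d}_{1}=(n-d_{1}-1)/(D-t)+O(1)$ for $t$ close to $D$, whence $h(t)=(n-d_{1}-1)/(t-D)+O(1)$, and multiplying by $(t-D)$ gives the second equality.

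The only point requiring care --- and the principal, very mild, obstacle --- is verifying the leading-order Fermi-coordinate expansion of $\sqrt{\det g}$. This is a classical consequence of the Gauss lemma together with the smoothness of the normal exponential map on the normal bundle of $M_{0}$, and coincides with the power-series expansion of $h$ near a focal submanifold already invoked in \cite[Corollary 2.2]{Ge}, cited just above the statement; in practice one could simply quote that reference rather than reproducing the expansion.
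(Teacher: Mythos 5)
Your proposal is correct and follows essentially the same route as the paper, which simply observes that the level sets near a focal submanifold are normal geodesic tubes and then quotes the expansion of their mean curvature from \cite[Corollary 2.2]{Ge}. Your Fermi-coordinate computation (using $h=\Delta{\bf d}=\partial_t\log\sqrt{\det g_t}$ and the leading power $t^{\,n-d_0-1}$ of the tube volume element, together with ${\bf d}+{\bf d}_1=D$ for the limit at $M_1$) just supplies the standard details behind that citation, as you yourself note.
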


Then  taking the limit we see that if if $x\in M_0 $

\begin{equation}
\Delta u (x) =(n-d_0 )  \phi'' (0) 
\end{equation}

\noindent
and if $x\in M_1 $

\begin{equation}
\Delta u (x) =(n-d_1 )  \phi'' (D ) .
\end{equation}

\medskip

Note that  we have

\begin{equation}
\Delta^2 (u) = (\phi ''''  + 2 \phi ''' h + 2 \phi '' h' + \phi ' h'' + \phi '' h^2 + \phi ' h' h ) \circ {\bf d}.
\end{equation}

Therefore,

\begin{lemma}Let $u \in C^{4,\alpha}_F (M)$, $ u(x)=\varphi({\bf d}(x))$, with $\varphi \in \mathcal{B}_4$. Then $u$ is a solution of  the
Paneitz-type Equation
(\ref{Paneitz}) 

$$\Delta^2 u -\alpha \Delta u +\beta (u-u^q  ) =0$$

\noindent
if and only if the function $\phi$ satisfies the Equation (\ref{PaneitzODE})

$$\varphi  '''' + 2h \varphi  ''' +(2h' +h^2 -\alpha ) \varphi '' +(h''+hh ' -\alpha h )\varphi  ' + \beta (\varphi - \varphi^ q) =0$$

\noindent 
on $[0,D ]$.

\end{lemma}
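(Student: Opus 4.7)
The plan is to carry out a direct substitution of the already-derived expressions for $\Delta u$ and $\Delta^2 u$ into (\ref{Paneitz}), and then use continuity to pass between $M-(M_0\cup M_1)$ and all of $M$. Since $\mathcal{B}_4$ and $C^{4,\alpha}_F(M)$ are identified by the previous lemma, both sides of (\ref{Paneitz}) are $C^{0,\alpha}$ on $M$, and it suffices to check the equation on the open dense set where $\mathbf{d}$ is smooth.

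The first step is to verify that on $M-(M_0\cup M_1)$, for $u=\varphi\circ\mathbf{d}$ with $\varphi\in\mathcal{B}_4$, we have
\begin{equation*}
\Delta^2 u = \bigl(\varphi''''+2h\varphi'''+(2h'+h^2)\varphi''+(h''+hh')\varphi'\bigr)\circ\mathbf{d}.
\end{equation*}
This is already displayed as equation~(2.8) in the text, but I would re-derive it cleanly by noting that $v:=\Delta u$ is itself $F$-invariant with profile $\psi(t)=\varphi''(t)+h(t)\varphi'(t)$, then applying equation~(2.5) once more to $v$ to obtain $\Delta^2 u=(\psi''+h\psi')\circ\mathbf{d}$, and differentiating $\psi$ directly. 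Combining with (2.5) for $\alpha\Delta u$ and the trivial identity for $\beta(u-u^q)$, the Paneitz-type operator applied to $u$ becomes
\begin{equation*}
\bigl(\varphi''''+2h\varphi'''+(2h'+h^2-\alpha)\varphi''+(h''+hh'-\alpha h)\varphi'+\beta(\varphi-\varphi^q)\bigr)\circ\mathbf{d},
\end{equation*}
which is exactly the left-hand side of (\ref{PaneitzODE}) evaluated at $\mathbf{d}(x)$. Since $\mathbf{d}$ is a surjection onto $[0,D]$ and the regular level sets sweep out $(0,D)$, this identity pointwise on $M-(M_0\cup M_1)$ is equivalent to (\ref{PaneitzODE}) holding pointwise on $(0,D)$.

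The step I expect to require the most care is the behaviour at the focal submanifolds, where by Lemma~\ref{mean-curvature} the coefficients $h$, $h'$, $h''$ blow up as $t\to 0^+$ and $t\to D^-$. The point is that on the open interval the equivalence is unconditional; on $M$ both sides of (\ref{Paneitz}) extend continuously because $u\in C^{4,\alpha}_F(M)$, and the extension on the profile side is automatic because the membership $\varphi\in\mathcal{B}_4$ (in particular $\varphi'(0)=\varphi'''(0)=\varphi'(D)=\varphi'''(D)=0$) is precisely the condition ensuring that the $F$-invariant function $\varphi\circ\mathbf{d}$ is $C^{4,\alpha}$ across $M_0$ and $M_1$; equations~(2.6)–(2.7) record the corresponding limits of $\Delta u$. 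Thus no additional boundary condition beyond those already encoded in $\mathcal{B}_4$ is needed, and the equivalence claimed in the lemma follows by restricting to $(0,D)$ in one direction and extending by continuity in the other.
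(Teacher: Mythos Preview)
Your proposal is correct and follows the same direct-substitution approach as the paper: the paper simply writes ``Therefore'' after displaying (2.5) and (2.8), and your argument makes explicit exactly that step (including the clean re-derivation of (2.8) via $\psi=\varphi''+h\varphi'$). Your additional care about the focal submanifolds and the continuity extension to the endpoints of $[0,D]$ is more detailed than what the paper records, but it is the natural justification the paper is implicitly relying on and not a different method.
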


\bigskip

We will assume that the function $h$ is antisymmetric around $D/2$, $h(D/2 -t ) = - h(D/2 + t)$. This is the case for instance
if there is an isometry ${\mathcal I}$ of $(M,g)$ which is a reflection around ${\bf d}^{-1} (D/2)$; it
sends ${\bf d}^{-1} (D/2 +t )$ to ${\bf d}^{-1} (D/2 -t )$. 

\medskip

The classical and important case of the round sphere can be treated explicitly.
 A Cartan-M\"{u}nzner  polynomial $P$
of degree $k$ on $\re^{n+1}$ is a polynomial, of degree $k$, which verifies

$$ \langle \nabla P , \nabla P \rangle = k^2 \| x \|^{2k -2} $$

$$ \Delta P = \frac{1}{2} c k^2 \| x \|^{k -2} ,$$

\noindent
which are called the Cartan-M\"{u}nzner equations. 
Then the restriction to the sphere, $p=P_{|_{S^n}}$, verifies the isoparametric conditions
$\|  \nabla p \|^2 = b\circ p$, $\Delta p = a \circ p$, with 
$b(t)= k^2 (1-t^2 )$, $a(t) =-k(n+k-1)t+ (1/2)ck^2$. 
It follows that $p:S^n \rightarrow [0,1]$.
The constant $c$ depends on the geometry,
the principal curvatures, of the level surfaces $p^{-1} (t)$, $t\in (0,1)$.

Every isoparametric function on the sphere $S^n$ is
given (up to reparametrization) by the restriction of a Cartan-M\"{u}nzner polynomial on $\re^{n+1}$.

Note that in this case

$$\frac{-b' +2a}{2\sqrt{b}} (t) = \frac{-2k(n-1)t +ck^2}{2k\sqrt{1-t^2}} .$$

We obtain that

$${\bf d} (x) = \int_{-1}^{f(x)} \frac{1}{\sqrt{b(s)}} ds = \frac{1}{k}  \left(  \arcsin (f(x)) + \frac{\pi}{2} \right),  \ \  \  D= \frac{\pi}{k}$$

and then, with $t={\bf d}(x)$  so $f(x)= \sin (kt-\pi /2)  =-\cos (kt)$, we have

$$h(t)=\frac{-b' +2a}{2\sqrt{b}} (- \cos (kt))= (n-1)\frac{\cos (kt)}{\sin (kt)} +\frac{ck}{2\sin (kt)}.$$ 

\noindent
which is invariant around $D/2=\pi /2k$ if and only if $c=0$.

The classification of isoparametric functions on spheres was only recently completed by Q-S Chi in \cite{Chi}, concluding the work
started by E. Cartan and others around 1930. 
It is known, \cite{m1, m2}, that $k$ can only take the values 1, 2, 3, 4 and 6. 
The case $k=1$ corresponds to linear functions, which satisfy the 
Cartan-M\"{u}nzner equations with $c=0$. For all Cartan-M\"{u}nzner polynomials of degree 3 and 6 we also have $c=0$. The case
$k=2$ corresponds to polynomials invariant by an action of $O(l)\times O(j)$, with $l+j = n$: in this case 
$c=0$ if and only if $j=l$. In the case $k=4$ there are also examples with $c=0$, and examples with $c\neq 0$. Detailed
discussions on isoparametric functions on spheres can be found for instance in \cite{Henry}.

\bigskip

We consider now the case of the 3-sphere, $S^3$. Recall that the eigenvalues of the positive Laplacian on $S^3$ are
$\mu_i = -i(i+2)$, $i=0,1,\dots$. 

In this case there are 2 types of isoparametric functions. The first case are 
the isoparametric functions invariant by an action of $O(3)$, fixing an axis in $\re^4$. These correspond to the 
Cartan-M\"{u}nzner polynomial of degree one, the linear functions. The eigenvalues of the Laplacian on functions 
invariant in this case are the same as the eigenvalues of the Laplacian, $\lambda_i = \mu_i$. For the  
Paneitz-type Equation \ref{Paneitz} for invariant functions, the function $h$ is $h(t) = 2\frac{\cos t}{\sin t}$, and
the equation is defined on $[0,\pi ]$. 

The second case are functions invariant by an isometric action by $O(2) \times O(2)$ on $\re^4$. In this case $k=2$, the
corresponding Cartan-M\"{u}nzner polynomial is $P(x_1 ,x_2 ,x_3 , x_4 ) = x_1^2 +x_2^2 -x_3^2 -x_4^2$. The eigenvalues
of the restricted Laplacian are $\lambda_i =\mu_{2i}$. The corresponding equation is defined in the interval $[0,\pi /2]$, with
$h(t) =2 \frac{\cos 2t}{\sin 2t}$.

\section{Results on the ordinary differential equation}

In this section we will study ordinary differential equation \ref{PaneitzODE}, where we assume
that $\alpha , \beta >0$ and $\alpha^2  \geq 4\beta$, $q>1$. The function $h$ will be defined in the open interval $(0,D)$.
We will  assume that $h$ is decreasing, antisymmetric around the middle point $d=D/2$. Namely, $h(d) =0$, 
$h(d+s) =-h(d-s)$. This implies in particular that also $h''(d)=0$. And in the limiting points $0$ and $D$ we assume that
 $\lim_{t\rightarrow 0} th(t) =j=-\lim_{t\rightarrow D} (D-t)h(t)$,
for some positive integer $j$.

We begin with the following elementary observation:

\begin{lemma}
Under the previous hypothesis if $\varphi$ solves Equation \ref{PaneitzODE} then $\phi (t) = \varphi (D-t)$ also solves Equation\ref{PaneitzODE}.
\end{lemma}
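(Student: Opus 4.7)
The plan is to verify directly that $\phi(t) := \varphi(D-t)$ satisfies Equation \ref{PaneitzODE}. The only ingredient beyond the chain rule will be the antisymmetry of $h$ about $D/2$, which, after differentiating the identity $h(D-t) = -h(t)$, yields $h'(D-t) = h'(t)$ and $h''(D-t) = -h''(t)$. Hence under the reflection $t \mapsto D-t$, the functions $h$ and $h''$ are odd while $h'$ (and therefore $h^2$) is even.

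First I would compute $\phi^{(k)}(t) = (-1)^k \varphi^{(k)}(D-t)$, so even-order derivatives carry no sign while odd-order derivatives pick up a minus. Then I would substitute these into the left-hand side of Equation \ref{PaneitzODE} evaluated at $t$ and rewrite every coefficient in terms of $h(D-t), h'(D-t), h''(D-t)$ via the symmetries above. In each summand the two potential sign flips — one from the derivative of $\phi$ and one from the parity of the coefficient — should match up so that the whole expression becomes the left-hand side of Equation \ref{PaneitzODE} for $\varphi$ at $s = D-t$, which vanishes by hypothesis.

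Concretely: the coefficients of $\varphi''''$ and $\varphi''$, namely $1$ and $2h' + h^2 - \alpha$, are even under reflection, and the corresponding derivatives of $\phi$ have no sign change, so those two terms transform as needed. The coefficient $2h$ of $\varphi'''$ is odd, and this cancels against $\phi'''(t) = -\varphi'''(D-t)$. The coefficient $h'' + hh' - \alpha h$ of $\varphi'$ is odd summand by summand (odd; odd $\times$ even; odd), and this cancels against $\phi'(t) = -\varphi'(D-t)$. Finally, the nonlinearity $\beta(\phi - \phi^q) = \beta(\varphi(D-t) - \varphi(D-t)^q)$ requires no manipulation.

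Because the argument reduces to a mechanical sign count, no real obstacle arises; the only step requiring any care is confirming that each of the three summands in $h'' + hh' - \alpha h$ is individually odd about $D/2$, which is where the full force of the assumption $h''(D/2) = 0$ (and of course the antisymmetry of $h$ itself) enters.
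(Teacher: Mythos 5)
Your verification is correct and is precisely the elementary sign count the paper has in mind (the lemma is stated there without proof as an ``elementary observation''): $\phi^{(k)}(t)=(-1)^k\varphi^{(k)}(D-t)$ together with $h(D-t)=-h(t)$, $h'(D-t)=h'(t)$, $h''(D-t)=-h''(t)$ turns the equation for $\phi$ at $t$ into the equation for $\varphi$ at $D-t$. One minor remark: the oddness of $h''$ is obtained simply by differentiating $h(D-t)=-h(t)$ twice, so $h''(D/2)=0$ is a consequence of the antisymmetry of $h$ rather than a separate assumption your argument needs to invoke.
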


\vspace{.4cm}

We write \ref{PaneitzODE} as a first order system:

\vspace{.5cm}

\hspace{5cm}$v_0 ' =v_1$

\hspace{5cm}$v_1 ' = v_2 -hv_1$

\hspace{5cm}$v_2 ' =v_3 + \alpha v_1$

\hspace{5cm}$v_3 ' =\beta (v^q_0 -v_0 )  -hv_3 .$

\vspace{.4cm}

In this system $v_0$ is the solution of  Equation \ref{PaneitzODE}.  
Note that $v_2 = v_0 '' +h v_0 '$,  and $v_3 =v_1 '' + h ' v_1 + h v_1 ' - \alpha v_1  = v_0 '''  + h v_0 '' +h ' v_0 ' 
-\alpha v_0 '$. The system is similar to  \cite[(2.2)]{Lazzo}, but it is not quasimonotone.

\begin{remark}\label{Decomposition}
We assume that $\alpha, \beta >0$ and that $\alpha^2 \geq 4\beta$. This implies that there exists $c, d>0$ such that
$c.d=\beta$ and $c+d =\alpha$. Explicitly, $c=(1/2) (\alpha +\sqrt{\alpha^2 -4\beta})$ and $d=(1/2) (\alpha - \sqrt{\alpha^2 -4\beta})$.
\end{remark}

\begin{remark}\label{Basic}
It follows easily from the system that a solution which is continuous and defined at $0$ must verify that $v_1 (0) = v_3 (0) =0$. Also
by the conditions we are assuming on $h$ we have that $\lim_{t\rightarrow 0} v_1 (t) h(t) = j v_1 '(0)$. It follows that 
$v_2 (0) = (1+j) v_1 ' (0)$. Similarly  $(1+j) v_3 ' (0) =
\beta  (v^q_0 -v_0 ) (0)$. In the same way we have that $v_2 (D) = (1+j)v_1 ' (D)$ and $(1+j) v_3 ' (D) = \beta (v_0^q -v_0 )(D)$.
\end{remark}

\begin{remark}
Given any pair $(a,b) \in \re^2$ there exists a unique solution of the system with initial conditions $v_0 (0) = a$,
$v_2 (0) = b$, $v_1 (0) =v_3 (0) =0$. The solutions are $C^1$,  and then it follows that 
$v_i$ is $C^{5-i}$. In particular $v_0$ is $C^5$, and a standard solution of Equation \ref{PaneitzODE}. The solutions
depend $C^1$-continuously on the initial parameters $(a,b)$.
\end{remark}

\begin{definition}\label{Global}
A solution $(v_0 , v_1 , v_2 , v_3 )$  of the system  is called global if it is defined on the whole interval $[0,D]$. In particular we must have
$v_1 (0)=v_3 (0) =v_1 (D) = v_3 (D)=0$
\end{definition}

Let  $H(t) = e^{\int_d^t h(s) dx}$. Note that $H(0)=0$, $H(D)=0$, $H(d)=1$ , $H(t)>0  \ \forall t\in (0,d)$ and $H'(t)=h(t) H(t)\ \  \forall t\in (0,D)$.

Note also that $(Hv_3 )' = H(v_3 ' + hv_3 )$, $ ( Hv_1 )' = H (v_1 ' + h v_1 )$. 

Then we have that if $(v_0 , v_1 , v_2 , v_3 )$ is a global solution of the system then 

\begin{equation}
\beta \int_0^D (v_0^q (t) -v_0 (t) H(t) dt = \int_0^D (v_3 ' (t) +h(t) v_3 (t) )H(t) dt = \int_0^D  (Hv_3 )' (t) dt =0.
\end{equation}

It follows that

\begin{lemma}\label{AboveBelowOne}
Assume that $(v_0 , v_1 , v_2 , v_3 )$ is a global  solution of the system. If $v_0 \geq 1$ then $v_0 \equiv 1$. If $0\leq v_0 \leq 1$ then $v_0 \equiv 0$
or $v_0 \equiv 1$. 

\end{lemma}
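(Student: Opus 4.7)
The plan is to exploit the integral identity displayed immediately before the statement, namely
\[
\beta \int_0^D \bigl(v_0^q(t) - v_0(t)\bigr) H(t)\, dt = 0,
\]
together with the sign of the weight $H$. First I would record that $H(t) = e^{\int_d^t h(s)\,ds}$ is strictly positive on the open interval $(0,D)$: indeed, $H(d) = 1$ and $H'=hH$, so $H$ cannot vanish on $(0,D)$. (The boundary values $H(0)=H(D)=0$, inherited from $\lim_{t\to 0}h(t) = +\infty$ and $\lim_{t\to D}h(t)=-\infty$, are irrelevant to the argument.)

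For the first assertion, I would assume $v_0 \geq 1$ throughout $[0,D]$. Then, since $q>1$, the pointwise inequality $v_0^q(t) - v_0(t) \geq 0$ holds, and the integrand $(v_0^q - v_0) H$ is nonnegative on $(0,D)$. The vanishing of the integral therefore forces $(v_0^q - v_0)(t) H(t) = 0$ for every $t\in (0,D)$, and by positivity of $H$ on $(0,D)$ this yields $v_0(t)^q = v_0(t)$ for all $t \in (0,D)$. Combined with $v_0 \geq 1$, this pins $v_0 \equiv 1$ on $(0,D)$, and continuity at the endpoints extends the identity to $[0,D]$.

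For the second assertion, I would run exactly the same argument with the inequality reversed: if $0 \leq v_0 \leq 1$ then $v_0^q \leq v_0$, the integrand is nonpositive on $(0,D)$, so the vanishing of the integral again gives $v_0(t)^q = v_0(t)$ on $(0,D)$. This forces $v_0(t) \in \{0,1\}$ pointwise on $(0,D)$. Since $v_0$ is continuous and $(0,D)$ is connected, it must be identically $0$ or identically $1$ there, and again continuity extends the conclusion to $[0,D]$.

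The main (and really only) subtlety is the degeneracy of $H$ at the endpoints: I need to make sure the boundary behavior of $H$ does not spoil the pointwise conclusion. Because $H>0$ on the open interval and $v_0$ is continuous on the closed interval, the argument above localizes cleanly to $(0,D)$ and then extends to $[0,D]$ by continuity, so no additional work is needed. Thus the proof is short and mostly a matter of carefully reading off signs from the integral identity supplied by the weight $H$.
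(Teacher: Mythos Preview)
Your proposal is correct and is exactly the argument the paper has in mind: the paper simply writes ``It follows that'' after the integral identity $\beta\int_0^D (v_0^q - v_0)H\,dt = 0$, and you have correctly spelled out the sign analysis using $H>0$ on $(0,D)$ and continuity at the endpoints.
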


\begin{lemma}\label{MaximumPrinciple} Assume that $(v_0 , v_1 , v_2 , v_3 )$ is a global solution of the system and that $v_0 \geq 0$.
Then $v_0 \equiv 0$ or $v_0 >0$.
\end{lemma}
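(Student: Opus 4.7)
The plan is to use the factorization approach of Djadli--Hebey--Ledoux~\cite{Hebey}: split the fourth-order operator as a composition $(L-c_1)(L-c_2)$ of two second-order operators and apply a maximum principle at each step. By Remark~\ref{Decomposition} I would take $c_1, c_2 > 0$ with $c_1 + c_2 = \alpha$ and $c_1 c_2 = \beta$; with $Lu := u'' + hu'$, equation~(\ref{PaneitzODE}) becomes $(L-c_1)(L-c_2)\varphi = \beta \varphi^q$. Setting $\psi := (L-c_2)\varphi$, the hypothesis $v_0 \geq 0$ immediately gives $(L-c_1)\psi = \beta \varphi^q \geq 0$ on $(0, D)$.

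The first step is to show $\psi \leq 0$ by testing $(L-c_1)\psi \geq 0$ against $\psi_+ H$, where $\psi_+ = \max(\psi, 0)$ and $H$ is the weight introduced before Lemma~\ref{AboveBelowOne} (satisfying $H(0) = H(D) = 0$, $H > 0$ on $(0, D)$, and $H' = hH$). A direct computation from the system gives $\psi = v_2 - c_2 v_0$ and $\psi' = v_3 + c_1 v_1$, so Remark~\ref{Basic} ensures $\psi'(0) = \psi'(D) = 0$. Together with $H(0) = H(D) = 0$ and the identity $H \cdot L\psi = (H\psi')'$, integration by parts would yield
$$
0 \leq \int_0^D H(L-c_1)\psi \cdot \psi_+ \, dt = -\int_0^D H(\psi_+')^2 \, dt - c_1 \int_0^D H \psi_+^2 \, dt,
$$
forcing $\psi_+ \equiv 0$ because $H > 0$ on $(0, D)$ and $c_1 > 0$. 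Hence $(L - c_2)\varphi \leq 0$.

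The second step is then to deduce the dichotomy $\varphi \equiv 0$ or $\varphi > 0$ from $(L-c_2)\varphi \leq 0$, which is a supersolution inequality for a second-order operator with zeroth-order coefficient $-c_2 \leq 0$. On any compact subinterval of $(0, D)$ the coefficients are smooth and bounded, so the classical strong minimum principle directly forces $\varphi \equiv 0$ as soon as $\varphi \geq 0$ vanishes at an interior point. For the remaining case---$\varphi > 0$ on $(0, D)$ with, say, $\varphi(0) = 0$---I would use $\varphi'(0) = v_1(0) = 0$, the asymptotic $\lim_{t \to 0} t h(t) = j$, and Taylor expansion to obtain $\lim_{t \to 0^+} (L-c_2)\varphi(t) = (1+j)\varphi''(0)$; combined with $\varphi \geq 0$ and $\varphi(0) = \varphi'(0) = 0$ this forces $\varphi''(0) = 0$, and an iteration using $v_3(0) = 0$ should then contradict $\varphi > 0$ near $0$.

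The main obstacle will be precisely this endpoint analysis, since the unboundedness of $h$ at $t = 0, D$ places the problem outside the direct scope of the classical Hopf boundary lemma. A cleaner route is to invoke the correspondence of Lemma~2.2 together with Lemma~\ref{mean-curvature} to identify $\varphi$ with a smooth $F$-invariant function $u$ on the closed manifold $M$ and recast the inequality as $(-\Delta + c_2)u \geq 0$; on a compact manifold without boundary this is a Schr\"odinger-type inequality with positive potential, and the standard strong minimum principle applies at every point, in particular at the focal submanifolds (corresponding to $t = 0$ and $t = D$), giving the desired dichotomy without any endpoint subtleties.
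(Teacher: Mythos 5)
Your starting point is the same as the paper's: the splitting of Remark \ref{Decomposition} turns the equation into $(L-c_1)(L-c_2)\varphi=\beta\varphi^q$ (in the paper this appears as $w_2=v_2-cv_0$ with $w_2''+hw_2'-dw_2=\beta v_0^q\ge 0$), and your first step is correct. You prove $\psi=(L-c_2)\varphi\le 0$ by a weighted energy argument, testing against $H\psi_+$ and using $H(0)=H(D)=0$ together with $\psi'=v_3+c_1v_1$, which vanishes at $0$ and $D$ by Remark \ref{Basic}; the paper gets the same inequality pointwise, by noting that a subsolution of $L-d$ cannot have a strictly positive maximum. Either variant works. The genuine divergence is in the second step. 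The paper never invokes a strong minimum principle: at a zero $t_0$ of $v_0$ it checks that $v_1(t_0)=0$ and $v_2(t_0)\ge 0$, hence $v_2(t_0)=0$ because $w_2\le 0$, hence $w_2'(t_0)=0$ (a maximum of $w_2$), hence $v_2'(t_0)=0$ and $v_3(t_0)=0$, and then concludes $v_0\equiv 0$ from uniqueness for the first-order system; this treats interior points and the endpoints $0,D$ uniformly. Your route---classical strong minimum principle for $L-c_2$ on $(0,D)$, plus a separate endpoint analysis---is viable, and for interior zeros it is complete.

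The endpoint case is where you stop short, and the fix is simpler than the ``iteration'' you gesture at: once you have $\varphi(0)=\varphi'(0)=\varphi''(0)=0$ (your Taylor argument for $\varphi''(0)=0$ is fine), Remark \ref{Basic} gives $v_2(0)=(1+j)\varphi''(0)=0$, while $v_1(0)=v_3(0)=0$ holds automatically; the uniqueness statement for the initial value problem of the system (the remark preceding Definition \ref{Global}) then forces $v_0\equiv 0$, contradicting $\varphi>0$ on $(0,D)$. No further use of $v_3(0)=0$ is needed---this is exactly how the paper closes its own argument. Your proposed ``cleaner route'' through the closed manifold should be treated with more caution: Lemma \ref{MaximumPrinciple} is stated in the abstract ODE setting of Section 3, where $h$ is only assumed decreasing, antisymmetric, with $\lim_{t\to 0}th(t)=j$ a positive integer, and such an $h$ need not be the mean curvature profile of an isoparametric family on any closed manifold, so that route does not prove the lemma as stated. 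Even in the geometric application there is a small regularity point: a global solution of the system is a priori only known to satisfy $\varphi'(0)=\varphi'(D)=0$, i.e.\ $\varphi\in\mathcal{B}_2$ and $u=\varphi\circ{\bf d}\in C^{2,\alpha}_F(M)$ (which does suffice for the strong minimum principle for $-\Delta+c_2$), whereas calling $u$ ``smooth'' would require $\varphi'''(0)=\varphi'''(D)=0$, which you have not verified. So keep the ODE-level argument, completed as above, as the actual proof.
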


\begin{proof} As mentioned in Remark \ref{Decomposition} there exist $c, d >0$ such that 
$c.d =\beta$ and $c+d =\alpha$. Let $w_2 = v_2 -cv_0$. Then $w_2 '' + h w_2 ' -d w_2 =(v_3 + \alpha v_1 )' +h (v_3 +\alpha v_1) -cv_1 '-chv_1 -dv_2 +cdv_0
=\beta (v_0^q -v_0) -hv_3 +\alpha (v_2 -hv_1 ) +hv_3 +\alpha h v_1 -c(v_2 -hv_1 )-chv_1 -dv_2 +\beta v_0 =
\beta v_0^q \geq 0$.
This implies that $w_2 \leq 0$, since it cannot have a strictly positive maximum. 

Assume that  at some point $t_0 \in [0,D]$ we have that $v_0 (t_0 )=0$. 
Since $0$ is a local minimum of $v_0$ we must also have that $v_1 (t_0 )=0$ and $v_1 '(t_0 ) \geq 0$.
This implies that $v_2 (t_0 ) \geq 0$, and since $w_2 (t_0 )\leq 0$ we must have that $v_2 (t_0 )=0$.
Then $w_2 (t_0 )=0$ and $w_2 ' (t_0 )=0$ since $t_0$ is a maximum for $w_2$. Then $v_2 ' (t_0 )=0$.
Therefore $v_3 (t_0 )=0$. Then by the uniqueness of
solutions, $v_0 \equiv 0$.

\end{proof}

\begin{lemma}\label{Negative}
Let $(v_0 , v_1 , v_2 , v_3 )$ be a solution of the system. If for some $t_0 \in (0,D)$ we have that
$v_0  (t_0 )<1$ and $v_1 (t_0 ) , v_2 (t_0 ), v_3 (t_0 ) < 0$ then $(v_0 , v_1 , v_2 , v_3 )$ is not a global nonnegative solution.
\end{lemma}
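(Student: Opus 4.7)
The plan is a contradiction argument: assume the solution is global in the sense of Definition \ref{Global} with $v_0 \geq 0$ on $[0,D]$, and track the first instant after $t_0$ at which any of $v_1,v_2,v_3$ returns to zero. Since a global solution satisfies $v_1(D)=v_3(D)=0$, the stopping time
$$\tau := \inf\{t \in (t_0,D] : v_1(t) \geq 0 \text{ or } v_2(t) \geq 0 \text{ or } v_3(t) \geq 0\}$$
lies in $(t_0,D]$. By continuity, $v_i(\tau) \leq 0$ for $i=1,2,3$ with equality for at least one $i$, and each $v_i$ that vanishes at $\tau$ has left-derivative $v_i'(\tau) \geq 0$ since $v_i<0$ on $[t_0,\tau)$.

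Next I would cascade through the system to restrict the admissible sign patterns at $\tau$. If $v_1(\tau)=0$, the equation $v_1'=v_2-hv_1$ gives $v_1'(\tau)=v_2(\tau)\geq 0$, which combined with $v_2(\tau)\leq 0$ forces $v_2(\tau)=0$; applying the same principle to $v_2'=v_3+\alpha v_1$ yields $v_3(\tau)=0$. Symmetrically, $v_2(\tau)=0$ gives $v_3(\tau)+\alpha v_1(\tau)\geq 0$, which together with $v_1(\tau),v_3(\tau)\leq 0$ and $\alpha>0$ forces $v_1(\tau)=v_3(\tau)=0$. Hence only two scenarios survive: (A) all three vanish at $\tau$, or (B) $v_3(\tau)=0$ while $v_1(\tau),v_2(\tau)<0$. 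In both, $v_3'(\tau)\geq 0$, and since $v_3(\tau)=0$ the $v_3$-equation reduces to $v_3'(\tau)=\beta(v_0^q-v_0)(\tau)$. Because $v_1<0$ on $[t_0,\tau)$ the function $v_0$ is strictly decreasing there, so $v_0(\tau)<v_0(t_0)<1$; combined with $v_0\geq 0$, the bracket $v_0^q-v_0$ is nonpositive, and the inequality $v_3'(\tau)\geq 0$ forces $v_0(\tau)=0$.

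Finally I extract the contradiction in each scenario. In (A), $(v_0,v_1,v_2,v_3)(\tau)=(0,0,0,0)$ is an equilibrium of the (locally Lipschitz, since $q>1$) ODE system, so by uniqueness the whole trajectory is identically zero, contradicting $v_1(t_0)<0$. In (B), if $\tau<D$ then $v_0\geq 0$ attains an interior minimum at $\tau$, forcing $v_0'(\tau)=v_1(\tau)=0$ against $v_1(\tau)<0$; while $\tau=D$ is already incompatible with $v_1(\tau)<0$ because the global boundary condition of Definition \ref{Global} demands $v_1(D)=0$. The delicate point I anticipate is scenario (A) at $\tau=D$, where $h$ blows up and the left-derivative inequality for $v_3$ must be replaced by the boundary relation $(1+j)v_3'(D)=\beta(v_0^q-v_0)(D)$ from Remark \ref{Basic}; this again forces $v_0(D)=0$, after which Lemma \ref{MaximumPrinciple} yields $v_0\equiv 0$ and delivers the same contradiction with $v_1(t_0)<0$.
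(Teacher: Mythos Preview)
Your proof is correct, but it takes a different and considerably longer route than the paper's.

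The paper's argument is a direct monotonicity argument using the integrating factor $H(t)=\exp\bigl(\int_d^t h\bigr)$, which satisfies $H>0$ on $(0,D)$, $H(0)=H(D)=0$, and $(Hv_i)'=H(v_i'+hv_i)$. At the stopping time $t_1$ the paper shows in one pass that \emph{all three} quantities stay strictly negative: first $v_2'=v_3+\alpha v_1<0$ on $[t_0,t_1)$ gives $v_2(t_1)<0$; then $(Hv_1)'=Hv_2<0$ together with $(Hv_1)(t_0)<0$ gives $(Hv_1)(t_1)<0$, which both forces $t_1<D$ (since $H(D)=0$) and yields $v_1(t_1)<0$; finally $(Hv_3)'=H\beta(v_0^q-v_0)<0$ on $[t_0,t_1]$ (using $0<v_0<1$, the strict positivity coming from Lemma \ref{MaximumPrinciple}) gives $v_3(t_1)<0$. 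This contradicts the definition of $t_1$ immediately, with no case analysis and no boundary case to treat separately.

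Your approach replaces the integrating factor by the observation $v_i'(\tau)\ge 0$ for any $v_i$ that first returns to zero, and then cascades through the equations. This forces the two-case split (A)/(B), the separate handling of $\tau=D$, and the appeals to ODE uniqueness at the equilibrium $(0,0,0,0)$ and to Lemma \ref{MaximumPrinciple} at the end. All of this is valid, but it is more machinery for the same conclusion. The payoff of the paper's method is that the integrating factor absorbs the singular coefficient $h$ and turns the estimates into clean strict monotonicity statements, so the endpoint $t_1=D$ never needs special attention.
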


\begin{proof} Note that by the previous lemma if $(v_0 , v_1 , v_2 , v_3 )$ were a global nonnegative solution we would have that $v_0 >0$.
If it were a global solution we would have that $v_1 (D)=0$. Therefore there must be a first value
$t_1 > t_0$ such that one of the inequalities in the statement of the lemma
becomes an equality. Note that $v_0 (t_1 ) <1$ since 
$v_0 (t_0 ) <1$ and $v_0 ' =v_1 <0$ in $[t_0 , t_1 )$.  Since $v_1 , v_3 <0$ in $[t_0 , t_1 )$ it
also follows that $v_2 (t_1 )<0$. Note that $(Hv_1 ) ' = H (v_1 ' + hv_1 )= Hv_2 .$ Since
$v_2 <0$ in $[t_0 , t_1 )$ and $H(t_0 ) v_1 (t_0 ) <0$ it follows that $v_1 (t_1 )<0$. In
particular we must have $t_1 <D$. Finally $(H .v_3 )' =H (v_3 ' + hv_3 ) = H \beta (v_0^q -v_0 ) <0$
in $[t_0 , t_1 ]$ which implies that $v_3 (t_1 ) <0$. This is a contradiction with the 
definition of $t_1$ and therefore $(v_0 , v_1 , v_2 , v_3 )$ cannot be global.

\end{proof}

Similarly we have:

\begin{lemma}\label{Positive}
Let $(v_0 , v_1 , v_2 , v_3 )$ be a solution of the system. If for some $t_0 \in (0,D)$ we have that
$1<v_0 (t_0 ) $ and $v_1 (t_0 ) , v_2 (t_0 ), v_3 (t_0 )> 0$ then $(v_0 , v_1 , v_2 , v_3 )$ is not a global solution.
\end{lemma}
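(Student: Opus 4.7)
The plan is to mirror the proof of Lemma \ref{Negative} almost verbatim, exploiting the sign-reversal symmetry of the structure. Suppose for contradiction that $(v_0,v_1,v_2,v_3)$ is a global solution. Since globality (Definition \ref{Global} together with Remark \ref{Basic}) forces $v_1(D)=v_3(D)=0$ while $v_1(t_0)>0$, continuity ensures the existence of a first $t_1\in(t_0,D]$ at which at least one of the three strict inequalities $v_1>0$, $v_2>0$, $v_3>0$ becomes an equality; all three remain strictly positive on $[t_0,t_1)$. The goal is to show that in fact $v_1(t_1), v_2(t_1), v_3(t_1)$ are all strictly positive, contradicting the choice of $t_1$.

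First I would handle $v_0$ and $v_2$ by pure monotonicity. Since $v_0' = v_1 > 0$ on $[t_0,t_1)$, we have $v_0(t) \geq v_0(t_0) > 1$ on $[t_0,t_1]$, so $v_0^q - v_0 > 0$ there. This is the analogue of the hypothesis $0 \leq v_0 \leq 1$ used in Lemma \ref{Negative}, and explains why no extra non-negativity assumption is needed in the present statement. Next, $v_2' = v_3 + \alpha v_1 > 0$ on $[t_0,t_1)$, so $v_2$ is strictly increasing on $[t_0,t_1]$ and
\begin{equation*}
v_2(t_1) > v_2(t_0) > 0.
\end{equation*}

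The heart of the argument is the estimates for $v_1$ and $v_3$ obtained through the integrating factor $H$, exactly as in the proof of Lemma \ref{Negative}. From $(Hv_1)' = Hv_2$ and $(Hv_3)' = H\beta(v_0^q - v_0)$, integrating over $[t_0,t_1]$ gives
\begin{equation*}
H(t_1)v_1(t_1) = H(t_0)v_1(t_0) + \int_{t_0}^{t_1} H(s)v_2(s)\,ds,
\end{equation*}
\begin{equation*}
H(t_1)v_3(t_1) = H(t_0)v_3(t_0) + \beta\int_{t_0}^{t_1} H(s)(v_0^q - v_0)(s)\,ds.
\end{equation*}
Since $H>0$ on $(0,D)$ and both integrands are strictly positive on $(t_0,t_1)$, each right-hand side is strictly larger than the positive quantity $H(t_0)v_1(t_0)$ or $H(t_0)v_3(t_0)$.

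The one point requiring care, and the main obstacle, is the possibility $t_1 = D$; I expect this to be dispatched the same way as in Lemma \ref{Negative}. If $t_1 = D$ then $H(t_1) = 0$, making the left-hand sides above equal to zero and contradicting strict positivity of the right-hand sides. Therefore $t_1 < D$, hence $H(t_1) > 0$, and dividing by $H(t_1)$ yields $v_1(t_1) > 0$ and $v_3(t_1) > 0$. Combined with $v_2(t_1) > 0$ from the monotonicity step, all three inequalities remain strict at $t_1$, contradicting the definition of $t_1$ and completing the proof.
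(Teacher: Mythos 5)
Your proposal is correct and follows essentially the same route as the paper: first exit time $t_1$, monotonicity of $v_0$ and $v_2$ (via $v_2'=v_3+\alpha v_1>0$), and the integrating factor $H$ applied to $(Hv_1)'=Hv_2$ and $(Hv_3)'=H\beta(v_0^q-v_0)$ to keep $v_1,v_3$ positive, yielding the contradiction. Your explicit treatment of the case $t_1=D$ via $H(D)=0$ is just a slightly more detailed rendering of the paper's remark that $v_1(t_1)>0$ forces $t_1<D$.
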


\begin{proof}
If it were a global solution we would have that $v_1 (D)=0$. Therefore, as in the previous lemma, 
there must be a first value
$t_1 > t_0$ such that one of the inequalities in the statement becomes an equality. But $v_0 (t_1 ) >1$ since 
$v_0 (t_0 ) >1$ and $v_0 ' =v_1 >0$ in $[t_0 , t_1 )$.  Since $v_1 , v_3 >0$ in $[t_0 , t_1 )$ it
also follows that $v_2 ' (t ) >0$ for all $t \in [t_0 , t_1 )$ and therefore
$v_2 (t)>0$ in $[t_0 , t_1 ]$. Since $(Hv_1 ) ' = H (v_1 ' + hv_1 )= Hv_2 $ and
$v_2 >0$ in $[t_0 , t_1 )$, $H((t_0 ) v_1 (t_0 ) >0$ it follows that $v_1 (t_1 )>0$. In
particular we must have $t_1 <D$. Finally $(H .v_3 )' =H (v_3 ' + hv_3 ) = H \beta (v_0^q -v_0 ) >0$
in $[t_0 , t_1 ]$ which implies that $v_3 (t_1 ) >0$. This is a contradiction with the 
definition of $t_1$ and therefore $(v_0 , v_1 , v_2 , v_3 )$ is not a global solution.

\end{proof}

\begin{lemma}\label{One}
A solution must verify $v_1 (0) = v_3 (0) =0$. Assume that $v_0 (0)=1$. Then $v_2 (0) =0$ and  $v_0 \equiv 1$, 
or the solution is not global and nonnegative.

\end{lemma}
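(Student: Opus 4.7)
The first assertion, $v_1(0) = v_3(0) = 0$, is already contained in Remark \ref{Basic}. So the real content of the lemma is: under $v_0(0) = 1$, if the solution is global and nonnegative, then $v_2(0) = 0$ and $v_0 \equiv 1$. My plan is to split on the sign of $v_2(0)$ and, in the two non-degenerate cases, produce a small $t_0 > 0$ at which the hypotheses of Lemma \ref{Negative} or Lemma \ref{Positive} hold.

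First, the easy case $v_2(0) = 0$. Together with $v_0(0) = 1$ and $v_1(0) = v_3(0) = 0$, this says the initial data of the first-order system agree with those of the constant solution $v_0 \equiv 1$ (which trivially verifies the system since $v_0^q - v_0 = 0$). By the uniqueness statement in Remark \ref{Basic}, $v_0 \equiv 1$, as claimed.

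Next, suppose $v_2(0) > 0$. From Remark \ref{Basic}, $v_2(0) = (1+j) v_1'(0)$, so $v_1'(0) > 0$, which forces $v_1(t) > 0$ and hence $v_0(t) > 1$ for all sufficiently small $t > 0$; continuity also gives $v_2(t) > 0$ on the same range. The delicate point is $v_3$, since both $v_3(0) = 0$ and (using $v_0(0) = 1$ in the last equation of the system together with Remark \ref{Basic}) $v_3'(0) = 0$. Here I would use the integrating-factor identity already exploited earlier in the section:
\begin{equation*}
(H v_3)'(t) = H(t)\bigl(v_3'(t) + h(t) v_3(t)\bigr) = H(t)\,\beta\bigl(v_0^q(t) - v_0(t)\bigr).
\end{equation*}
Since $H(0) = 0$, also $(Hv_3)(0) = 0$, and for small $t > 0$ the right-hand side is strictly positive (as $H > 0$ on $(0,D)$ and $v_0 > 1$ near $0$). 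Hence $Hv_3 > 0$ on a right neighborhood of $0$, and dividing by $H > 0$ gives $v_3(t) > 0$ there. Picking any $t_0$ in this neighborhood, the hypotheses of Lemma \ref{Positive} are met, so the solution is not global.

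Finally, $v_2(0) < 0$ is handled symmetrically: now $v_1'(0) < 0$, so on a right neighborhood of $0$ we have $v_0 < 1$, $v_1 < 0$ and $v_2 < 0$. The same identity $(Hv_3)' = H\beta(v_0^q - v_0)$ is now negative near $0$ (because $v_0 < 1$), which via $(Hv_3)(0) = 0$ and $H > 0$ yields $v_3 < 0$. Lemma \ref{Negative} then rules out a global nonnegative solution. The main technical step is the $v_3$-sign argument via $Hv_3$; everything else is bookkeeping from Remark \ref{Basic} and continuity.
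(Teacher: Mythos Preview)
Your proof is correct and follows essentially the same approach as the paper: split on the sign of $v_2(0)$, use uniqueness for $v_2(0)=0$, and in the other two cases use the integrating-factor identity $(Hv_3)' = H\beta(v_0^q - v_0)$ together with $H(0)=0$ to determine the sign of $v_3$ near $0$, then invoke Lemma~\ref{Positive} or Lemma~\ref{Negative}. The only cosmetic difference is that the paper obtains $v_1>0$ near $0$ via $(Hv_1)'=Hv_2$ rather than from $v_1'(0)>0$, and the uniqueness statement you cite is actually in the remark following Remark~\ref{Basic}.
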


\begin{proof} If $v_2 (0)=0$ we have that $v_0 \equiv 1$ by the uniqueness of solutions. 

If $v_2 (0)>0$ then there exists an interval $[0,\delta )$ such that $v_2 (t) =(hv_1 + v_1 ' )(t) >0$ for
all $t \in [0,\delta )$. Then $(Hv_1 )' (t) =(t) H(hv_1 + v_1 ' ) (t)>0$  for all $t \in  (0,\delta )$. Then $v_1 (t) >0 $ for all $t\in (0,\delta )$.
Then $v_0 (t) >1$ for all $t\in (0,\delta )$. The $(H.v_3 ) ' (t) = H(v_3 ' + hv_3 ) (t) =H \beta (v_0^q -v_0 )(t) >0$
for all $t\in (0,\delta )$. It follows that $v_3 (t) >0$ for all $t\in (0,\delta )$.
Then it follows from Lemma \ref{Positive} that the solution is not global.

Similarly, if $v_2 (0)<0$ we prove that there exists $\delta <0$ such that for all $t\in (0,\delta )$ we have that
$v_0 (t) <1$ and $v_1 (t), v_2 (t) , v_3 (t) <0$. And it follows from Lemma \ref{Negative} 
that the solution is not global and nonnegative.

\end{proof}

\begin{lemma}\label{AtZero} Let $(v_0 , v_1 , v_2 , v_3 )$ be a global non constant solution of the system with $v_0 >0$. Then
$v_0 (0) >1$ and $v_2 (0) <0$ or $v_0 (0) <1$ and $v_2 (0) >0$.

\end{lemma}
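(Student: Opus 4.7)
The plan is to combine the information we already have about initial conditions with the non-existence lemmas \ref{Negative} and \ref{Positive}, treating the unresolved borderline case $v_2(0)=0$ via a short Taylor expansion that exploits the explicit relation $(1+j)v_3'(0)=\beta(v_0^q-v_0)(0)$ from Remark \ref{Basic}.

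First I would note that Remark \ref{Basic} gives $v_1(0)=v_3(0)=0$, and Lemma \ref{One} immediately rules out $v_0(0)=1$ for a global nonconstant positive solution. So it suffices to handle the case $v_0(0)=a>1$ (the case $0<a<1$ being strictly analogous, applying Lemma \ref{Negative} in place of Lemma \ref{Positive} and using that $0<a<1,\ q>1$ forces $a^q-a<0$). For this case I would argue by contradiction: suppose $v_2(0)\ge 0$ and produce a point $t_0\in(0,D)$ at which $v_0(t_0)>1$ and $v_1(t_0),v_2(t_0),v_3(t_0)>0$, contradicting Lemma \ref{Positive}.

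The key step is showing that $v_3,v_2,v_1$ are all strictly positive immediately to the right of $0$. For $v_3$: write the subsystem as $(Hv_3)'=H\beta(v_0^q-v_0)$; since $v_0$ is continuous with $v_0(0)=a>1$, we have $v_0^q-v_0>0$ near $0$, and together with $H>0$ on $(0,D)$ and $(Hv_3)(0)=0$, this gives $v_3>0$ on some $(0,\delta)$. For $v_2$: if $v_2(0)>0$, continuity suffices; if $v_2(0)=0$, Remark \ref{Basic} forces $v_1'(0)=0$, and $v_2'=v_3+\alpha v_1$ then yields $v_2'(0)=0$ and
\[
v_2''(0)=v_3'(0)+\alpha v_1'(0)=v_3'(0)=\frac{\beta(a^q-a)}{1+j}>0,
\]
so $v_2>0$ on some $(0,\delta)$ by Taylor expansion. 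For $v_1$: from $(Hv_1)'=Hv_2>0$ near $0$ and $(Hv_1)(0)=0$ we get $Hv_1>0$, hence $v_1>0$ near $0$. Finally $v_0>1$ near $0$ by continuity. Pick any $t_0$ in the intersection of these neighborhoods and apply Lemma \ref{Positive}.

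I do not expect a single dominating obstacle, but the most delicate point is the subcase $v_2(0)=0$: there one cannot conclude anything from continuity alone, and one must exploit precisely that $h\sim j/t$ near $0$ (through Remark \ref{Basic}) to turn the initial condition $v_0(0)\ne 1$ into a definite sign for $v_3'(0)$, which then propagates down the chain $v_3\to v_2\to v_1$ through the first-order integrating-factor identities. Once that propagation is in place, both Lemmas \ref{Negative} and \ref{Positive} apply directly.
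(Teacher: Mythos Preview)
Your proposal is correct and follows essentially the same approach as the paper: both arguments rule out $v_0(0)=1$ via Lemma \ref{One}, then in the case $v_0(0)>1$ (resp.\ $<1$) assume $v_2(0)\ge 0$ (resp.\ $\le 0$), use the integrating-factor identity $(Hv_3)'=H\beta(v_0^q-v_0)$ to get a sign on $v_3$ near $0$, propagate this sign through $v_2$ and then $v_1$ via $(Hv_1)'=Hv_2$, and finish with Lemma \ref{Positive} (resp.\ \ref{Negative}). The only cosmetic difference is in the borderline subcase $v_2(0)=0$: you compute $v_2''(0)=v_3'(0)>0$ and invoke Taylor expansion, whereas the paper observes directly that $v_2'=v_3+\alpha v_1>0$ on a right-neighborhood of $0$ (since $v_3>0$ there and $v_1=O(t^2)$); the two are equivalent.
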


\begin{proof}
If $v_0 (0) >1$ there exists $\delta >0$ such that on the interval $[0,\delta]$ we have that
$v_0 >1$. Then $(Hv_3 )' (t) = H\beta (v_0^q -v_0 )(t) >0$ for all $t \in (0,\delta]$. It follows that 
$v_3 (t)>0$ for all $t\in (0,\delta ]$.

If $v_2 (0) >0$ then by taking $\delta$ smaller if necessary we can assume that  $v_2 (t) >0$ for all $t\in (0,\delta ]$. 
Then
$(H v_1 )' (t) =H v_2 (t) >0$ for all $t\in (0,\delta ]$. It follows that $v_1 (t) >0 $ for all $t\in (0,\delta ]$. It then
follows from Lemma \ref{Positive} that
the solution is not global.

If $v_2 (0)=0$ then we have by Remark \ref{Basic}  that $v_1 ' (0) =0$ and that $v_3 ' (0) >0$. It
follows that there exists $\delta >0$ such that for $t\in (0,\delta )$, $v_2 ' (t) >0$. Then $v_2 (t) >0$ for all $t\in (0,\delta )$. Then as in the previous
paragraph $v_1 (t) >0$ for $t\in (0,\delta )$, and the solution is not global.

Assume now that  $v_0 (0) <1$. Then there exists $\delta >0$ such that on the interval $(0,\delta]$ we have that
$v_0 <1$. Then $(Hv_3 )' (t) = H\beta (v_0^q -v_0 )(t) <0$ for all $t \in (0,\delta]$. It follows that 
$v_3 (t) <0$ for all $t\in (0,\delta ]$.

If $v_2 (0) <0$ then we can assume that $v_2 (t) <0$ for all $t\in (0,\delta ]$. Then
$(H v_1 )' (t) =H v_2 (t) <0$ for all $t\in (0,\delta ]$. It follows that $v_1 (t) <0 $ for all $t\in (0,\delta ]$
and it follows from Lemma \ref{Negative} that the solution is not global.

If $v_2 (0)=0$ then we have by the Remark \ref{Basic} that $v_1 ' (0) =0$ and that $v_3 ' (0) <0$. It
follows that for $t$ small, $v_2 ' (t) <0$. Then for $t$ small $v_2 (t) <0$. Then as in the previous
paragraph $v_1 (t) <0$ for $t$ small and the solution is not global from Lemma \ref{Negative}..

\end{proof}

\begin{lemma}\label{Negative2} Let $(v_0 , v_1 , v_2 , v_3 )$ be a solution of the system, $v_0 \geq 0$. Assume that for some 
$t_0 \in (0,D)$ we have that $v_0 (t_0 ) \leq 1$, $v_1 (t_0 )=0$ and there exists $\delta >0$ such that
$v_1 (t) <0$ if $0<|t-t_0 |< \delta$. Then the solution is not global.

\end{lemma}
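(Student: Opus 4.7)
The plan is to reduce to Lemma \ref{Negative}, arguing by contradiction. Suppose the solution is global. Since $v_0\geq 0$, Lemma \ref{MaximumPrinciple} gives $v_0\equiv 0$ or $v_0>0$; the alternative $v_0\equiv 0$ forces $v_1\equiv 0$, contradicting $v_1(t)<0$ near $t_0$. So $0<v_0(t_0)\leq 1$. The hypothesis also says $v_1$ attains a strict local maximum value $0$ at $t_0$, so $v_1'(t_0)=0$ and $v_1''(t_0)\leq 0$. Using $v_1'=v_2-hv_1$ at $t_0$ yields $v_2(t_0)=0$, and differentiating once more and plugging the system in gives $v_1''(t_0)=v_3(t_0)$, hence $v_3(t_0)\leq 0$.

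Next I would look for $t_1>t_0$ arbitrarily close to $t_0$ with $v_0(t_1)<1$ and $v_1(t_1),v_2(t_1),v_3(t_1)<0$, then invoke Lemma \ref{Negative} to derive a contradiction with globality. The condition on $v_0$ is automatic: since $v_1<0$ on $(t_0,t_0+\delta)$, $v_0$ is strictly decreasing there, so $v_0(t)<v_0(t_0)\leq 1$. If $v_3(t_0)<0$, then continuity gives $v_3<0$ on a right-neighborhood of $t_0$, while $v_2'(t_0)=v_3(t_0)+\alpha v_1(t_0)=v_3(t_0)<0$ together with $v_2(t_0)=0$ gives $v_2<0$ on a right-neighborhood as well, producing the desired $t_1$.

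The delicate case, which I expect to be the main obstacle, is $v_3(t_0)=0$. Here I split further on $v_0(t_0)$. If $v_0(t_0)=1$, then at $t_0$ the system has the same state as the constant solution $v_0\equiv 1$, so uniqueness forces $v_0\equiv 1$, again contradicting $v_1(t)<0$ near $t_0$. Hence $0<v_0(t_0)<1$, and then
\[
v_3'(t_0)=\beta\bigl(v_0(t_0)^q-v_0(t_0)\bigr)-h(t_0)v_3(t_0)=\beta v_0(t_0)\bigl(v_0(t_0)^{q-1}-1\bigr)<0,
\]
since $q>1$. Therefore $v_3(t)<0$ on $(t_0,t_0+\delta')$ for some $\delta'>0$. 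For $v_2$, I have the double degeneracy $v_2(t_0)=v_2'(t_0)=0$, but $v_2''(t_0)=v_3'(t_0)+\alpha v_1'(t_0)=v_3'(t_0)<0$, making $t_0$ a strict local maximum of $v_2$, so $v_2<0$ on a right-neighborhood. Again one extracts $t_1>t_0$ satisfying all the hypotheses of Lemma \ref{Negative}, contradicting globality and completing the proof.
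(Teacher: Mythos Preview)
Your proof is correct and follows essentially the same route as the paper: derive $v_2(t_0)=0$, $v_3(t_0)\le 0$ from the local maximum of $v_1$, then in each of the cases $v_3(t_0)<0$ and $v_3(t_0)=0$ produce a point $t_1$ just to the right of $t_0$ where Lemma~\ref{Negative} applies. The only cosmetic differences are that you frame the argument by contradiction and invoke Lemma~\ref{MaximumPrinciple} to exclude $v_0(t_0)=0$ up front (the paper leaves this implicit), and in the degenerate case you use $v_2''(t_0)<0$ whereas the paper argues directly that $v_2'=v_3+\alpha v_1<0$ on a right-neighborhood.
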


\begin{proof}Note that $v_1$ has a local maximum at $t_0$. Therefore $v_1 ' (t_0 ) =v_0 '' (t_0 )=0$ and
$v_1 '' (t_0 ) = v_0 ''' (t_0 ) \leq 0$. Then we have that $v_2 (t_0 )=0$ and $v_3 (t_0 ) \leq 0$.

If $v_3 (t_0 ) <0$ then there exists $\varepsilon \in (0, \delta )$ such that $v_3 (t) <0$ if $t \in [t_0 , t_0 +\varepsilon )$.
Then we can also assume that $v_2 ' (t) <0$ if $t \in [t_0 , t_0 +\varepsilon )$. Then $v_2  (t) <0$ if $t \in [t_0 , t_0 +\varepsilon )$. We also
have $v_0 (t) <1$ if $t\in (t_0 , t_0 +\varepsilon )$  and $v_1 (t)<0$ if $t\in (t_0 , t_0 +\varepsilon )$. Therefore 
it follows from Lemma \ref{Negative} that the solution is not global. 

Assume that $v_3 (t_0 )=0$. If $v_0 (t_0 ) =1$  it would follow from the uniqueness of solutions that $v_0 \equiv 1$, but 
this is a contradiction since we are assuming the $v_1 <0$ at some points.
Assume then that $v_0 (t_0 )<1$. It follows that $v_3 ' (t_0 )<0$. Then we can assume, by taking $\delta$ smaller, 
that
$v_3 (t) <0$ if $t\in (t_0 , t_0 + \delta )$. Then it follows that $v_2 '(t) <0$ if $t\in (t_0 , t_0 + \delta )$. Therefore $v_2 (t) <0$ if $t\in (t_0 , t_0 + \delta )$. 
We also have that  $v_0 (t)<1$ and $v_1 (t)<0$ if $t\in (t_0 , t_0 + \delta )$. Therefore it follows again from
Lemma \ref{Negative} that the solution is not global.

\end{proof}

The following lemma is proved in the same way, only changing the direction of the inequalities, so we
do not include the proof:

\begin{lemma}\label{Positive2} Let $(v_0 , v_1 , v_2 , v_3 )$ be a solution of the system. Assume that for some 
$t_0 \in (0,D)$ we have that $v_0 (t_0 ) \geq 1$, $v_1 (t_0 )=0$ and there exists $\delta >0$ such that
$v_1 (t) >0$ if $0<|t-t_0 |< \delta$. Then the solution is not global.

\end{lemma}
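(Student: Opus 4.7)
The plan is to mirror the argument used for Lemma \ref{Negative2}, simply flipping the direction of every inequality, and then to invoke Lemma \ref{Positive} at the end (rather than Lemma \ref{Negative}) to obtain the desired contradiction with globality.

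First I would extract information at the point $t_0$ itself. Since $v_1(t_0)=0$ and $v_1>0$ on a punctured neighbourhood, $t_0$ is a local minimum of $v_1$, so $v_1'(t_0)=0$ and $v_1''(t_0)\geq 0$. Translating this via the system identities $v_2=v_1'+hv_1$ and $v_3=v_1''+h'v_1+hv_1'-\alpha v_1$ (valid whenever we think of $v_0$ as a single fourth-order solution), I get $v_2(t_0)=0$ and $v_3(t_0)\geq 0$.

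Next I would split into the two cases $v_3(t_0)>0$ and $v_3(t_0)=0$. In the first case, by continuity I can shrink $\delta$ so that $v_3(t)>0$ for $t\in[t_0,t_0+\delta)$; combined with $v_1(t)>0$ on $(t_0,t_0+\delta)$ this gives $v_2'=v_3+\alpha v_1>0$ on that interval, hence $v_2(t)>0$ for $t\in(t_0,t_0+\delta)$. Moreover $v_0(t_0)\geq 1$ and $v_0'=v_1>0$ on $(t_0,t_0+\delta)$ force $v_0(t)>1$ on that interval. Pick any $t_1\in(t_0,t_0+\delta)$: then $v_0(t_1)>1$ and $v_1(t_1),v_2(t_1),v_3(t_1)>0$, so Lemma \ref{Positive} applied at $t_1$ shows the solution is not global.

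For the case $v_3(t_0)=0$ I first rule out $v_0(t_0)=1$: together with $v_1(t_0)=v_2(t_0)=v_3(t_0)=0$, uniqueness of solutions to the first-order system (Remark preceding Definition \ref{Global}) would force $v_0\equiv 1$, contradicting $v_1>0$ on $(t_0,t_0+\delta)$. Hence $v_0(t_0)>1$, so from $v_3'(t_0)=\beta(v_0^q(t_0)-v_0(t_0))-h(t_0)v_3(t_0)=\beta(v_0^q(t_0)-v_0(t_0))>0$ I conclude that $v_3$ is strictly positive immediately to the right of $t_0$, and the remainder of the argument is identical to the previous case. No step is really an obstacle here: the only thing to be careful about is that Lemma \ref{Positive} requires the strict inequality $v_0>1$, which is why one must evaluate at an interior point $t_1>t_0$ rather than at $t_0$ itself.
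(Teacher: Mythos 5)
Your proposal is correct and is essentially the paper's own argument: the authors state that Lemma \ref{Positive2} is proved by reversing the inequalities in the proof of Lemma \ref{Negative2}, which is exactly what you do, including the key steps $v_2(t_0)=0$, $v_3(t_0)\geq 0$, the case split on $v_3(t_0)$, the uniqueness argument ruling out $v_0(t_0)=1$ when $v_3(t_0)=0$, and the final appeal to Lemma \ref{Positive} at an interior point where the inequalities are strict.
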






\begin{lemma}\label{Triple}
Let $(v_0 , v_1 , v_2 , v_3 )$ be a non-constant, non-negative, solution of the system. Assume that for some 
$t_0 \in (0,D)$ we have that $v_1 (t_0 ) = v_2 (t_0 ) = v_3 (t_0 )=0$. Then it is not
a global solution.

\end{lemma}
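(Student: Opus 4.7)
My plan is to use the hypothesis $v_1(t_0)=v_2(t_0)=v_3(t_0)=0$ together with the system to compute the Taylor expansion of $(v_0,v_1,v_2,v_3)$ at $t_0$, show that the signs predicted by $v_0(t_0)-1$ propagate to the four coordinates for $t$ slightly greater than $t_0$, and then invoke Lemma \ref{Negative} or Lemma \ref{Positive} to conclude.

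First I would dispose of the case $v_0(t_0)=1$: then the system evaluated at $t_0$ gives $v_0'(t_0)=v_1'(t_0)=v_2'(t_0)=v_3'(t_0)=0$, so $(v_0(t_0),v_1(t_0),v_2(t_0),v_3(t_0))=(1,0,0,0)$ is the initial value at $t_0$ of the constant solution $\varphi \equiv 1$; uniqueness of the initial value problem (solved forward and backward from $t_0$) then forces $v_0\equiv 1$, contradicting the assumption that the solution is non-constant.

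Next, suppose $v_0(t_0)>1$. Differentiating the system and using $v_1(t_0)=v_2(t_0)=v_3(t_0)=0$, I get $v_1'(t_0)=v_2'(t_0)=0$ and $v_3'(t_0)=\beta(v_0(t_0)^q-v_0(t_0))>0$. Differentiating once more, $v_2''(t_0)=v_3'(t_0)+\alpha v_1'(t_0)=\beta(v_0(t_0)^q-v_0(t_0))>0$, and then $v_1''(t_0)=0$ together with $v_1'''(t_0)=v_2''(t_0)>0$. Taylor's formula around $t_0$ therefore gives, for $t>t_0$ sufficiently close to $t_0$,
\[
v_1(t)>0,\qquad v_2(t)>0,\qquad v_3(t)>0,\qquad v_0(t)>v_0(t_0)>1.
\]
Pick such a $t_1$; Lemma \ref{Positive} applied at $t_1$ shows that the solution is not global. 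The symmetric case $v_0(t_0)<1$ is identical with the reverse signs: $v_3'(t_0)<0$, $v_2''(t_0)<0$, $v_1'''(t_0)<0$, which gives $v_1(t),v_2(t),v_3(t)<0$ and $v_0(t)<v_0(t_0)<1$ for $t$ just to the right of $t_0$, and Lemma \ref{Negative} (combined with Lemma \ref{MaximumPrinciple} which ensures $v_0>0$ everywhere on a putative global non-negative solution) then prevents the solution from being global and non-negative.

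The main technical point is simply that the triple vanishing $v_1(t_0)=v_2(t_0)=v_3(t_0)=0$ makes the first nontrivial Taylor coefficient of each $v_i$ at $t_0$ a positive multiple of $v_0(t_0)^q-v_0(t_0)$; once the signs are aligned on a one-sided neighborhood, the earlier monotonicity lemmas do all the work. No delicate asymptotics near the focal points are needed, since $t_0\in(0,D)$ and the argument is purely local; in particular one does not need to invoke the antisymmetry of $h$ at this stage.
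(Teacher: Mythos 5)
Your proposal is correct and follows essentially the same route as the paper: both treat $v_0(t_0)=1$ by uniqueness, compute $v_3'(t_0)=\beta\bigl(v_0(t_0)^q-v_0(t_0)\bigr)$ and $v_2''(t_0)=v_3'(t_0)$ to propagate the sign of $v_0(t_0)-1$ to $v_1,v_2,v_3$ on a right neighborhood of $t_0$, and then invoke Lemma \ref{Positive} or Lemma \ref{Negative}. The only (inessential) difference is that you get the sign of $v_1$ from the Taylor coefficient $v_1'''(t_0)$, whereas the paper uses the integrating factor identity $(Hv_1)'=Hv_2$.
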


\begin{proof} If $v_0 (t_0 )=1$ then $v_0$ is the constant solution 1. 

If $v_0 (t_0 )>1$ then it follows
that $v_3 ' (t_0 ) >0$. Then there exists $\delta >0$ such that  $v_3 (t)>0$ for all $t \in (t_0 ,  t_0 + \delta )$. We have
that
$v_1 ' (t_0 ) =0$. We have that $v_2 ' (t_0 )=0$ but $v_2 '' (t_0 )  = v_3 ' (t_0 )>0$. 
It follows that, after choosing $\delta$ smaller if necessary, we can assume that $v_2 ' (t) >0$ for $t \in (t_0  , t_0 + \delta )$. 
Then $v_2 (t) >0$
for all $t \in (t_0 , t_0 + \delta )$. Then $(Hv_1 ) '  (t) =H v_2 (t) >0$ for $t \in (t_0 , t_0 + \delta )$.
Then $v_1 (t) >0$ for $t\in (t_0 , t_0 + \delta )$. Then it follows from Lemma \ref{Positive} that the solution is not global.

If $v_0 (t_0 )<1$ then it follows
that $v_3 ' (t_0 ) <0$. Then for some $\delta >0$ we have that $v_3 (t) <0$ for all $t \in (t_0 ,  t_0 + \delta )$. We have that
$v_1 ' (t_0 ) = v_2 ' (t_0 ) =0$. Then we have that $v_2 '' (t_0 ) <0$ and we can assume that  $v_2 ' (t) <0$ for $t \in (t_0 , t_0 + \delta )$. 
Then $v_2 (t) <0$
for $t \in (t_0 , t_0 + \delta )$. Then $(Hv_1 ) '  (t) =H v_2 (t) <0$ for $t \in (t_0 , t_0 + \delta )$.
Then $v_1 (t) <0$ for $t\in (t_0 , t_0 + \delta )$. Then it follows from Lemma \ref{Negative} that the solution is
not global.

\end{proof}

We are ready to prove that, if $\alpha, \beta >0$, and $\alpha^2 >4 \beta$, critical points of non-constant, positive solutions of Equation \ref{PaneitzODE}
are non-degenerate:

\begin{theorem}\label{Nondegenerate} If $u$ is a positive non-constant  solution of \ref{PaneitzODE} 
then $u(0) \neq 1$, $u(D) \neq 1$, and if $t \in [0,D]$ is a critical point of $u$ then
$u'' (t) \neq 0$.
\end{theorem}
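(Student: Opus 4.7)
My plan is to reduce each of the three assertions to a sign-type obstruction already built in the section, relying on Lemmas~\ref{One}, \ref{Positive}, \ref{Negative}, \ref{AtZero}, \ref{Positive2}, \ref{Negative2} and \ref{Triple}. For the boundary values, if $u(0)=1$ then Lemma~\ref{One} forces either $u\equiv 1$ (excluded by non-constancy) or the solution to fail to be global and nonnegative (excluded because $u$ is a positive global solution on $[0,D]$); this handles $u(0)\neq 1$, and $u(D)\neq 1$ follows by applying the same argument to the reflected solution $t\mapsto u(D-t)$, which solves \eqref{PaneitzODE} by the antisymmetry of $h$ (Lemma~3.1).

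Next I handle critical points at the endpoints. At $t=0$ the conditions $v_1(0)=v_3(0)=0$ hold automatically (Remark~\ref{Basic}); the same remark gives $v_2(0)=(1+j)\,u''(0)$, so $u''(0)=0$ forces $v_2(0)=0$. Combined with $(1+j)v_3'(0)=\beta(v_0^q-v_0)(0)\neq 0$ (using $v_0(0)\neq 1$ from the first step), I reproduce the proof of Lemma~\ref{Triple} anchored at $0$: the induced sign pattern spreads to a right half-neighborhood of $0$ and puts us in the hypothesis of Lemma~\ref{Positive} or Lemma~\ref{Negative}, contradicting globalness. The endpoint $D$ is symmetric.

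For an interior critical point $t_0\in(0,D)$, the assumption $u''(t_0)=0$ together with $v_1(t_0)=u'(t_0)=0$ gives $v_2(t_0)=0$, and a direct calculation yields $v_3(t_0)=u'''(t_0)$. If $v_3(t_0)=0$, Lemma~\ref{Triple} concludes immediately. If $v_3(t_0)\neq 0$, then $v_2'(t_0)=v_3(t_0)\neq 0$ and the identity $(Hv_1)'=Hv_2$ force $v_1$ to be strictly of the sign of $v_3(t_0)$ on both sides of $t_0$ in a small punctured neighborhood. When $v_3(t_0)>0$ and $v_0(t_0)\geq 1$, Lemma~\ref{Positive2} yields the contradiction, and symmetrically Lemma~\ref{Negative2} disposes of $v_3(t_0)<0$ with $v_0(t_0)\leq 1$.

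The main obstacle is the residual case $v_3(t_0)>0$ with $v_0(t_0)<1$ and its mirror. I dispose of the first by propagating backward along $[0,t_0]$ the quadruple sign pattern $v_0<1,\ v_1>0,\ v_2<0,\ v_3>0$. The four identities $v_0'=v_1$, $v_2'=v_3+\alpha v_1$, $(Hv_1)'=Hv_2$, and $(Hv_3)'=H\beta(v_0^q-v_0)$ give monotonicities which, together with the initial data $v_2(t_0)=(Hv_1)(t_0)=0$ and $(Hv_3)(t_0)>0$, prevent any of the four strict inequalities from breaking before $t=0$. Continuity at $0$ then forces $v_0(0)<1$ and $v_2(0)\leq 0$, which contradicts Lemma~\ref{AtZero}, whose hypothesis $v_0(0)<1$ requires $v_2(0)>0$ for a global positive non-constant solution. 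The mirror case forces $v_0(0)>1$ and $v_2(0)\geq 0$, again contradicting Lemma~\ref{AtZero}. Bridging the sign combinations left uncovered by Lemmas~\ref{Positive2}/\ref{Negative2} via this backward propagation is the step most in need of careful bookkeeping.
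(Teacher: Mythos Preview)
Your proof is correct, and for the boundary values $u(0),u(D)\neq 1$ and the two ``aligned'' interior cases you follow exactly the paper's argument.  There are two places where you diverge from the paper, both harmless.

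First, for the endpoint nondegeneracy $u''(0)\neq 0$ you reproduce the Triple-type sign analysis by hand; the paper simply quotes Lemma~\ref{AtZero}, which already states that $v_0(0)>1\Rightarrow v_2(0)<0$ and $v_0(0)<1\Rightarrow v_2(0)>0$, so $v_2(0)=(1+j)u''(0)\neq 0$ in one line.

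Second, and more interestingly, in the residual interior case $v_3(t_0)>0$ with $v_0(t_0)<1$ (and its mirror) the paper does \emph{not} propagate backward.  Instead it uses the reflection $t\mapsto u(D-t)$, which is again a solution by Lemma~3.1: for the reflected solution the point $D-t_0$ satisfies $v_0(D-t_0)\leq 1$ with $v_1<0$ in a punctured neighborhood, so Lemma~\ref{Negative2} applies directly.  Your backward-propagation argument is a genuine alternative: the four identities $v_0'=v_1$, $v_2'=v_3+\alpha v_1$, $(Hv_1)'=Hv_2$, $(Hv_3)'=H\beta(v_0^q-v_0)$ together with the anchoring values $v_2(t_0)=(Hv_1)(t_0)=0$, $(Hv_3)(t_0)>0$ do force the quadruple sign pattern to persist on $(0,t_0)$, and the contradiction with Lemma~\ref{AtZero} at $t=0$ follows.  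The paper's reflection trick is shorter; your propagation has the incidental virtue that this particular step does not use the antisymmetry of $h$ (you still need it elsewhere, of course, for the claims at $t=D$).
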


\begin{proof} Recall that under the hypothesis for $h$ we have that if $u$ is a solution of \ref{PaneitzODE} then
$v(t)=u(D-t)$ is also a solution. 

It follows from Lemma \ref{One} that $u(0) \neq 1$. Applying the same lemma to $v$ we also
have that $u(D) \neq 1$.

Let $(v_0 =u,v_1 , v_2, v_3 )$ be the corresponding solution of the system.

Let $t_0 \in [0,D]$ be a critical point of $u =v_0$. Then $v_1 (t_0 ) = u' (t_0 ) =0$. If $t_0 =0$ then
$v_2 (0) = (1+k) v_1 ' (0) = (1+k) u''(0)$. Then it follows from Lemma \ref{AtZero}  that $u'' (0) \neq 0$.
Applying the result to $v$ we see that $u'' (D) \neq 0$. Therefore we can assume that $t_0 \in (0,D)$. 
Note that $v_2 (t_0 ) = v_1 ' (t_0 ) = u'' (t_0 )$. Assume that $v_2 (t_0 )=0$. It follows from 
Lemma \ref{Triple} that $v_3 (t_0 )\neq 0$. Then $v_1 '' (t_0 ) = v_2 ' (t_0 )= v_3 (t_0 ) \neq 0$.
Then $v_1$ has a local extremum at $t_0$ and there exists $\delta >0$ such that if $0<|t-t_0 |< \delta$
then $v_1 (t)$ has a fixed sign. If $v_0 (t_0 ) \geq 1$ and  the sign is positive then we reach a contradiction
from Lemma \ref{Positive2}. If $v_0 (t_0 ) \geq 1$ and the sign is negative we reach a contradiction
applying Lemma \ref{Positive2} to $v$.  If $v_0 (t_0 ) \leq 1$ and  the sign is negative then we reach a contradiction
from Lemma \ref{Negative2}. If $v_0 (t_0 ) \leq 1$ and the sign is positive we reach a contradiction
applying Lemma \ref{Negative2} to $v$.

\end{proof}

\begin{remark}Consider a sequence of functions $u_k$ defined in a closed interval $[0,C]$,
$u_k \in C^{4} [0,C]$,
which converge in the $C^3$ topology to a function $u \in C^4 [0,C]$. 

Assume that for some fixed positive
integer $n$ the functions $u_k$ have exactly $n$ critical points, which include $0$ and $C$. 
Let $0=t_1 (k) < t_2 (k) <...<t_n (k)=C$ be
the n critical points of $u_k$. Assume also that $u_k '' (t_i (k) ) \neq 0$ for each 
$i=1,..., n$. Finally assume that for each $i$ the sequence $t_i (k)$ is convergent
and let $t_i$ be the limit, $0=t_1 
\leq t_2 \leq ... \leq t_n =C$. Note that $u' (t_i ) =0$. 

Note that there exists  $t \in  (t_i (k) , t_{i+1} (k))$ 
such that $u_k '' (t) =0$.  It then follows that if $t_i = t_{1+1}$ then $u '' (t_i )=0$. 
Then if we assume that for any critical point $t$ of $u$ we have that $u'' (t) \neq 0$ then
$u$ must have exactly $n$ critical points.

\end{remark}

\begin{theorem}\label{Close}
Let $\alpha _k , \beta_k$ be positive constants satisfying $\alpha_k^2 \geq 4 \beta_k$. Assume that
the sequences are convergent, $\alpha_k \rightarrow \alpha$, $\beta_k \rightarrow \beta$, $\alpha,
\beta >0$. Let $u_k \in C^5 [0,D]$ be a positive solution of

\begin{equation}
\varphi  '''' + 2h \varphi  ''' +(2h' +h^2 -\alpha_k ) \varphi '' +(h''+hh ' -\alpha_k  h )\varphi  ' + \beta_k (\varphi - \varphi^ q) =0.
\end{equation}

\noindent
Assume that for all $k$, $u_k$ has exactly $n$ critical points and that the sequence $u_k$ converges
($C^4$) to $u\in C^5 [0,D]$. Then $u$ is a positive solution of the limit equation

\begin{equation}
\varphi  '''' + 2h \varphi  ''' +(2h' +h^2 -\alpha ) \varphi '' +(h''+hh ' -\alpha h )\varphi  ' + \beta (\varphi - \varphi^ q) =0.
\end{equation}

\noindent
and it is either equal to the constant solution 1 or has  exactly $n$ critical points

\end{theorem}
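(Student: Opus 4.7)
The plan has three steps: pass to the limit in the ODE; show $u$ is strictly positive; count its critical points. Concretely, since $u_k\to u$ in $C^4$ and $(\alpha_k,\beta_k)\to(\alpha,\beta)$, every coefficient of the $u_k$-equation converges uniformly on compact subintervals of $(0,D)$ to the corresponding coefficient for $u$, so $u$ solves the limit ODE there; reading $u^{(4)}$ off the equation and bootstrapping gives $u\in C^5(0,D)$, while the four boundary conditions $u'(0)=u'''(0)=u'(D)=u'''(D)=0$ pass to the limit directly from the $C^4$ convergence.

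For positivity, $u\geq 0$ as a uniform limit of positive functions, and Lemma~\ref{MaximumPrinciple} forces either $u\equiv 0$ or $u>0$. To rule out $u\equiv 0$, I would rescale: set $w_k=u_k/\|u_k\|_\infty$, so $\|w_k\|_\infty=1$ and $w_k$ satisfies the ODE with nonlinear term $\beta_k(w_k-\|u_k\|_\infty^{q-1}w_k^q)$. Standard ODE bootstrap, expressing $w_k^{(4)}$ from the equation and iterating, yields $C^3$-precompactness of $\{w_k\}$, so up to a subsequence $w_k\to w$ with $\|w\|_\infty=1$ and $w$ solving the linear equation $L^2 w-\alpha L w+\beta w=0$, $w'(0)=w'(D)=0$, where $Lv=v''+hv'$ is the operator from the introduction. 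By Remark~\ref{Decomposition} this factors as $(L-c)(L-d)w=0$ with $c,d>0$; since $L$ is self-adjoint with respect to the weighted measure $H(t)\,dt$ (where $H'=hH$) with spectrum in $(-\infty,0]$, each factor $L-c$, $L-d$ is injective on the space of functions satisfying the Neumann conditions, forcing $w\equiv 0$ and contradicting $\|w\|_\infty=1$.

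For the critical-point count, if $u$ is constant then $u\equiv 1$ is the only positive constant solution of the limit equation. Otherwise Theorem~\ref{Nondegenerate} guarantees that every critical point of $u$ is nondegenerate. Label the critical points of $u_k$ as $0=t_1(k)<\cdots<t_n(k)=D$ and pass to a subsequence with $t_i(k)\to t_i\in[0,D]$; the remark preceding the statement then applies verbatim, because a collapse $t_i=t_{i+1}$ would locate an intermediate zero of $u_k''$ at a critical point of $u$, violating nondegeneracy. Hence $u$ has exactly $n$ critical points. The main obstacle is the positivity step: the maximum principle alone cannot exclude the trivial limit, and one must combine rescaling with the spectral observation that the positive roots of $\mu^2-\alpha\mu+\beta=0$ cannot be eigenvalues of $L$.
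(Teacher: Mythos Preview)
Your argument is correct, and on the limit passage and the critical-point count it coincides with the paper's proof (both invoke Lemma~\ref{MaximumPrinciple}, Theorem~\ref{Nondegenerate}, and the remark preceding the statement). The only substantive difference is in how you exclude $u\equiv 0$. The paper does this in one line: since each $u_k$ is a positive non-constant global solution, Lemma~\ref{AboveBelowOne} forces $\max u_k>1$, and this survives the $C^0$-limit, so $\max u\geq 1$. Your rescaling/spectral route---passing to $w_k=u_k/\|u_k\|_\infty$ and using that the roots $c,d>0$ of $\mu^2-\alpha\mu+\beta=0$ are not eigenvalues of $L$---is valid but considerably heavier, and your compactness claim for $\{w_k\}$ really needs the elliptic regularity of the underlying equation on $M$ rather than a direct ODE bootstrap, since the coefficients are singular at $0$ and $D$. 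The paper's observation costs nothing beyond Lemma~\ref{AboveBelowOne}; your approach, on the other hand, would still work in situations where no pointwise bound like $\max u_k>1$ is available.
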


\begin{proof} Clearly $u$ is a non-negative solution of the limit equation. Since $u_k$ are 
non-constant solutions it follows that the maximum of $u_k$ is greater than 1. It follows
that $u$ cannot be the constant solution 0. Then it follows from Lemma
\ref{MaximumPrinciple} that $u$ is positive. If $u$ is not the constant solution 1, then it
follows from Lemma \ref{Nondegenerate} that all the critical points of $u_k$ and $u$ are
nondegenerate. Then it follows from the previous remark that $u$ has exactly $n$ critical
points.
\end{proof}

\section{Positive solutions of Equation \ref{Paneitz} with $\alpha , \beta$ close to 0}

We consider the Paneitz-type operator $$P_{(\alpha,\beta)}(u):=\Delta^2u-\alpha\Delta u+\beta u,$$
where $\alpha,\beta>0$.
In this section we shall study positive solutions  of the Paneitz-type equation $ P_{(\alpha,\beta)}u=\beta u^q$
when  the parameters $\alpha$ and $\beta$ are close to zero. For convenience we will at some points renormalize the 
equation as $ P_{(\alpha,\beta)}u= u^q$.

The proof of the next lemma follows the argument in \cite[Theorem 2.2]{Licois}.

\begin{lemma}\label{uigoingtozero} Let $u$ be a  positive function solving 
$ P_{(\alpha ,\beta )}u = u^q$. Assume that 

\begin{equation}\label{smallnorm}
q\|u \|^{q-1}_{\infty}<\mu_1(0),
\end{equation}

where $\mu_1(0)$ is the first positive eigenvalue of the Bilaplacian :

$$\mu_1(0)= \inf_{ \{ u \neq 0 :\int u=0 \} }\dfrac{\int (\Delta u)^2}{\int u^2} .$$

Then $u$ is constant.
\end{lemma}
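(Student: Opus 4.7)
The plan is to test the equation $P_{(\alpha,\beta)} u = u^q$ against $u - \bar{u}$, where $\bar{u} = \frac{1}{\operatorname{Vol}_g(M)} \int_M u \, dv$ is the average of $u$, and to combine the resulting integral identity with the Rayleigh quotient definition of $\mu_1(0)$.

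First I would multiply both sides of the equation by $u - \bar{u}$ and integrate over $M$. Since $\int_M (u-\bar{u})\,dv = 0$ and $\Delta\bar{u} = 0$, three integrations by parts turn the left hand side into
\[
\int_M (\Delta u)^2\,dv + \alpha \int_M |\nabla u|^2\,dv + \beta \int_M (u - \bar{u})^2 \,dv,
\]
while on the right, adding and subtracting $\bar{u}^q$ gives $\int_M u^q(u - \bar{u})\, dv = \int_M (u^q - \bar{u}^q)(u - \bar{u}) \, dv$. The mean value theorem for $t\mapsto t^q$ on $(0,\infty)$ provides at each point $x$ an intermediate value $\xi(x)$ between $u(x)$ and $\bar{u}$ with $u(x)^q - \bar{u}^q = q\,\xi(x)^{q-1}(u(x) - \bar{u})$; since $u>0$ and $q > 1$, the value $\xi(x)$ lies in $(0,\|u\|_\infty]$, so this integral is bounded above by $q\|u\|_\infty^{q-1} \int_M (u-\bar{u})^2\,dv$.

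Next I would apply the variational characterization of $\mu_1(0)$ to the zero-mean function $u - \bar{u}$, using $\Delta(u - \bar{u}) = \Delta u$, to obtain $\int_M (\Delta u)^2 \, dv \geq \mu_1(0) \int_M (u - \bar{u})^2 \, dv$. Assembling everything yields
\[
\bigl(\mu_1(0) + \beta - q\|u\|_\infty^{q-1}\bigr) \int_M (u-\bar{u})^2 \, dv + \alpha \int_M |\nabla u|^2 \, dv \leq 0.
\]
Under the hypothesis $q\|u\|_\infty^{q-1} < \mu_1(0)$ together with $\alpha, \beta > 0$, both summands are nonnegative and the coefficient of the first is strictly positive, so each summand must vanish; in particular $\int_M |\nabla u|^2 \, dv = 0$, which forces $u$ to be constant.

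There is no serious obstacle: this is the standard coercivity/rigidity computation adapted from \cite{Licois} to the fourth order operator $P_{(\alpha,\beta)}$. The only point requiring care is the pointwise estimate for $u^q - \bar{u}^q$, where positivity of $u$ (hence of $\bar{u}$) keeps the mean value point $\xi(x)$ inside $(0,\|u\|_\infty]$ so that $\xi(x)^{q-1} \leq \|u\|_\infty^{q-1}$.
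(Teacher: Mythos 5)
Your proposal is correct and is essentially the paper's own argument: test the equation against $u-\bar u$, integrate by parts, bound $\int (u^q-\bar u^q)(u-\bar u)$ by $q\|u\|_\infty^{q-1}\int (u-\bar u)^2$ via the mean value theorem, and compare with the Rayleigh quotient for $\mu_1(0)$. The only cosmetic difference is that the paper absorbs the gradient term through the comparison $\mu_1(0)\le\mu_1(\alpha)$ and concludes $\int (u-\bar u)^2=0$ directly, while you keep $\alpha\int|\nabla u|^2$ explicit and let it vanish; the content is the same.
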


\begin{proof} We have that 

$$\mu_1(0)= \inf_{\int u=0}\dfrac{\int (\Delta u)^2}{\int u^2} \leq \mu_1(\alpha )=\inf_{\int u=0}\dfrac{\int (\Delta u)^2+\alpha \int |\nabla u|^2}{\int u^2}.$$

Now consider the average $\overline{u}$ of the positive solution $u$, defined by 
$$\overline{u}:=\dfrac{\int_M u}{vol(M,g)}.$$

We get 
\begin{align*}
\mu_1(0)\int(u -\overline{u})^2\leq& \int (\Delta (u -\overline{u}))^2+\alpha \int|\nabla (u -\overline{u})|^2\\
&=\int (\Delta^2(u -\overline{u})-\alpha \Delta(u -\overline{u }))   \    (u-\overline{u})\\
&=\int (\Delta^2u -\alpha  \Delta u )(u -\overline{u})\\
&=\int -\beta u (u -\overline{u} )+\int u^q (u -\overline{u})\\
&=-\beta \int (u -\overline{u}) (u -\overline{u} )+\int (u^q-\overline{u}^q)(u -\overline{u}).
\end{align*}
In last equality we use that $$\int \overline{u} (u -\overline{u})=\int \overline{u}^q(u -\overline{u} )=0.$$

On the other hand, applying the mean value theorem to the function $T(x)=x^q$ we obtain that 
$ (u^q-\overline{u}^q)(u -\overline{u}) \leq q\|u  \|_{\infty}^{q-1}(u -\overline{u}) (u -\overline{u} )$. Then

$$\int (u^q-\overline{u}^q)(u-\overline{u})\leq      q\|u_i\|_{\infty}^{q-1}\int(u  -\overline{u })^2.$$

Combining the previous inequalities, we have
$$\mu_1(0)\int (u -\overline{u })^2\leq -\beta \int(u -\overline{u})^2+q\|u \|_{\infty}^{q-1}\int(u -\overline{u })^2.$$
By condition \ref{smallnorm} $$\int(u -\overline{u} )^2=0.$$ 
Therefore $u$ is constant.

\end{proof}

\begin{theorem}\label{CloseToZero}

Let $q<p^*:=(n+4)/(n-4)$.

There exist $\alpha^* >0$ such that if $\alpha\in (0,\alpha^*)$ and $\beta >0$ with $\alpha^2/4-\beta>0$ then the only positive solution of

\begin{equation}\label{eq}
    P_{(\alpha,\beta)}u=\beta u^q
\end{equation}

is $u=1$.
\end{theorem}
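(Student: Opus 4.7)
The plan is a contradiction argument that combines the renormalization $\tilde u := \beta^{1/(q-1)}u$ with a blow-up analysis on $M$. Suppose the theorem fails: there exist sequences $\alpha_k \to 0^+$, $0<\beta_k<\alpha_k^2/4$ (so that $\beta_k \to 0$ as well), and positive, nonconstant solutions $u_k$ of $P_{(\alpha_k,\beta_k)} u_k = \beta_k u_k^q$. The rescaled functions $\tilde u_k := \beta_k^{1/(q-1)} u_k$ are then nonconstant positive solutions of $P_{(\alpha_k,\beta_k)} \tilde u_k = \tilde u_k^q$, which is exactly the normalized equation treated by Lemma \ref{uigoingtozero}. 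That lemma forces
\[
\|\tilde u_k\|_\infty \;\geq\; c_0 \,:=\, \left(\mu_1(0)/q\right)^{1/(q-1)} \,>\, 0 \qquad \text{for every } k.
\]
Hence it suffices to rule out the existence of positive solutions $\tilde u_k$ of $P_{(\alpha_k,\beta_k)}\tilde u_k = \tilde u_k^q$ whose $L^\infty$-norm is bounded below by $c_0$ while $\alpha_k,\beta_k\to 0$.

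Two cases will be excluded. In the \emph{bounded case}, $\sup_k \|\tilde u_k\|_\infty < \infty$, so $\tilde u_k^q$ is uniformly $L^\infty$-bounded; standard $W^{4,p}$ and Schauder estimates for the uniformly elliptic operator $P_{(\alpha_k,\beta_k)}$ yield uniform bounds on $\tilde u_k$ in $C^{4,\gamma}(M)$. A subsequential $C^4$-limit $\tilde u_\infty\geq 0$ then satisfies $\Delta^2 \tilde u_\infty = \tilde u_\infty^q$ on $M$ (since $\alpha_k,\beta_k\to 0$), and integrating this identity over $M$ forces $\tilde u_\infty\equiv 0$, contradicting $\|\tilde u_\infty\|_\infty\geq c_0$. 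In the \emph{blow-up case}, $m_k := \|\tilde u_k\|_\infty\to\infty$: pick $x_k$ with $\tilde u_k(x_k)=m_k$, set $L_k := m_k^{(q-1)/4}$, and in normal coordinates around $x_k$ define $w_k(y) := m_k^{-1}\tilde u_k(\exp_{x_k}(y/L_k))$, so that $w_k(0) = 1$, $0 \leq w_k \leq 1$, and $w_k$ satisfies
\[
\Delta_{\tilde g_k}^2 w_k - \frac{\alpha_k}{L_k^2}\, \Delta_{\tilde g_k} w_k + \frac{\beta_k}{L_k^4}\, w_k = w_k^q,
\]
where the rescaled metric $\tilde g_k := L_k^2 \exp_{x_k}^* g$ converges smoothly on compact sets to the Euclidean metric, and both $\alpha_k/L_k^2, \beta_k/L_k^4 \to 0$. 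Interior elliptic regularity combined with Arzel\`a--Ascoli produces a $C^4_{\mathrm{loc}}$-limit $w_\infty \geq 0$ on $\re^n$ with $w_\infty(0)=1$, $w_\infty\leq 1$, solving $\Delta^2 w_\infty = w_\infty^q$.

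The main obstacle is this blow-up step: ruling out $w_\infty$ requires the Liouville theorem for the subcritical biharmonic Lane--Emden equation $\Delta^2 u = u^q$ on $\re^n$ (nonexistence of positive classical solutions), which is available precisely because $q < p^* = (n+4)/(n-4)$. This is the only place the subcriticality hypothesis enters essentially; the bounded case is routine once the rescaling $\tilde u = \beta^{1/(q-1)} u$ has converted the $\beta\to 0$ degeneration of the right-hand side into a lower-order-coefficient degeneration of the operator $P_{(\alpha_k,\beta_k)}$.
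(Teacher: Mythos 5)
Your proposal is correct and follows essentially the same route as the paper: renormalize to $P_{(\alpha,\beta)}\tilde u=\tilde u^q$, use Lemma \ref{uigoingtozero} for a uniform lower bound on $\|\tilde u_k\|_\infty$, and then split into the bounded case (elliptic estimates, pass to a limit solving $\Delta^2 u=u^q$ on $M$, integrate to force $u\equiv 0$) and the blow-up case (rescale to the Euclidean equation $\Delta^2 W=W^q$ and invoke the subcritical Liouville theorems of Lin and Wei--Xu). The only cosmetic difference is that in the bounded case the paper normalizes by $\|u_i\|_\infty$ and uses the factorization $(-\Delta+c_1)(-\Delta+c_2)$ for regularity, while you work with $\tilde u_k$ directly via fourth-order estimates; both are standard and equivalent here.
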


\begin{proof} Assume that there exist positive sequences  $\alpha_i,\beta_i$ with $\alpha_i^2/4-\beta_i>0$ such that
\begin{itemize}
    \item $\alpha_i,\beta_i\rightarrow 0$;
    \item there exists a positive non-constant solution $u_i$  ($\neq \beta_i^{1/(q-1)}$) of $$P_{\alpha_i,\beta_i}u_i=u^q_i .$$
\end{itemize}

Note that the previous lemma gives a positive lower bound for $\|u_i\|_{\infty}$. Therefore 
we have two possibilities for the sequence $\|u_i\|_{\infty}$. Either there exists a subsequence  such that
$\lim_{k\rightarrow \infty} \|u_{i_k}\|_{\infty} = a \in (0,\infty )$  or $\lim_{i \rightarrow \infty } \|u_i\|_{\infty} =\infty$.

Assume first  that $\|u_i\|_{\infty}\rightarrow a \in(0,\infty )$.
Let  $w_i:=u_i/\|u_i\|_{\infty}$, which verifies that $\|w_i\|_{\infty} =1$ and

$$P_{(\alpha_i,\beta_i)}w_i=\|u_i\|^{q-1}_{\infty}w_i^q .$$

Since $\alpha_i^2/4-\beta_i>0$, we can split the operator $P_{(\alpha_i,\beta_i)}$ into  the following form
$$(-\Delta+c_1 (i) )\circ(-\Delta+c_2 (i) ),$$
where $$c_1(i)=\alpha_i/2+\sqrt{\alpha_i^2/4-\beta_i}  >0 \quad\text{and}\quad c_2(i)=\alpha_i/2-\sqrt{\alpha_i^2/4-\beta_i} >0.$$

Therefore we have that $(-\Delta+c_1 (i) ) \ (-\Delta+c_2 (i) ) (w_i ) = \|u_i\|^{q-1}_{\infty}w_i^q.$ 
Then we can apply regularity theory
for elliptic equations to obtain the existence of a subsequence such that $\lim w_i = w \in C^{4}$, with  $\|w \|_{\infty} =1$,
$w\geq 0$,  and 

$$\Delta^2w=a^{q-1}w^q.$$

In particular this implies that $\int w^q=0$ and we conclude that $w=0$, which contradicts that $\|w \|_{\infty} =1$.

\medskip

We can therefore assume that  $\|u_i\|_{\infty}\rightarrow\infty$. Note that up to this point we have only used  that $q>1$.
Assuming that $q$ is subcritical, $q<p^*$ we can apply the blow up technique. 

Pick points $x_i \in M$ so that
$u_i (x_i )= \|u_i\|_{\infty} = \lambda_i \rightarrow\infty$. Also let $r$ be a positive number, less than the injectivity radius of $(M,g)$. 
Then define a sequence of functions $w_i$ on $B\left( 0,\lambda_i^{\frac{4}{q-1}}  r \right) $, the Euclidean ball with center at the origin
and radius $\lambda_i^{\frac{4}{q-1}}  r$,   by $$w_i(x):=\frac{1}{\lambda_i} u_i  \left(  exp_{x_i}(\lambda_i^{-\frac{q-1}{4}} x)  \right)  .$$

The functions $w_i$ verify that $w_i (0)=1 = \|  w_i \|_{\infty} $. Let $g$ still denote the metric on $B(0,r))$ which is the pull-back of $g$ by the
exponential map and consider the metric $g_0$ on  $B\left( 0,\lambda_i^{\frac{4}{q-1}}  r \right) $  given by $g_0 (x) = g \left( \lambda_i^{\frac{-4}{q-1}} \ x \right) $.
Denote by $P^{g_0 }$ the Paneitz operator with respect to the metric $g_0$ and let $\widetilde{\alpha}_i =\alpha_i \lambda_i^{\frac{1-q}{2}} $,
$\widetilde{\beta}_i = \beta_i\lambda_i^{(1-q)} $. By a direct computation we have that $w_i$ verifies 
the equation 

$$P^{g_0}_{(\widetilde{\alpha}_i  ,
\widetilde{\beta}_i ) }w_i=w_i^q.$$

Note that $\widetilde{\alpha}_i^2  \geq 4 \widetilde{\beta}_i$ and then as in Remark \ref{Decomposition}
we can split the operator $P_{(\widetilde{\alpha}_i  ,\widetilde{\beta}_i  )}$ and $w_i$ satisfies an equation
 $$(-\Delta_{g_0} +c_1(i))\circ(-\Delta_{g_0} +c_2(i)) w_i = w_i^q , $$

\noindent
with $c_1 (i), c_2 (i) \rightarrow 0$. 

Note also that the metric $g_0$ depends on $i$ and $C^k$-converges to the Euclidean metric on any compact set $K$.
It then follows from elliptic theory that the sequence $w_i$ converges to a nonnegative  function $W\in C^4(\mathbb{R}^n)$ which is a
solution of the problem

\begin{align*}
\Delta^2W&=W^q\quad\text{in} º \ \mathbb{R}^n,\\
W(0)&=1.
\end{align*}
Since $q<p^*$,  this is a contradiction with the results  in \cite{Lin, Wei2}.

\end{proof}

For any $0<\varepsilon < K $ let $S=S_{\varepsilon , K} = \{(u,\alpha , \beta ) : u$ is a positive solution of \ref{Paneitz}
and $\alpha , \beta  \in [\varepsilon , K]$ , with $\alpha^2 \geq 4\beta  \} $.

\begin{theorem}\label{Compact}
If $q<p^*$ then $S$ is compact.

\end{theorem}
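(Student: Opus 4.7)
The plan is to imitate the blow-up analysis carried out in the proof of Theorem \ref{CloseToZero}, except that now the sequence of parameters is bounded away from both $0$ and $\infty$, so the task is to pass to a subsequential limit rather than to reach a contradiction at $\alpha=\beta=0$. Fix a sequence $(u_i,\alpha_i,\beta_i)\in S_{\varepsilon,K}$. Since $[\varepsilon,K]^2$ is compact, extract a subsequence with $\alpha_i\to\alpha$ and $\beta_i\to\beta$ in $[\varepsilon,K]$; the closed condition $\alpha^2\geq 4\beta$ is preserved.

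The first and main step is a uniform $L^\infty$-bound on $u_i$. Suppose, for contradiction, that $\lambda_i:=\|u_i\|_\infty\to\infty$, and pick maximum points $x_i\in M$. Rescale exactly as in the proof of Theorem \ref{CloseToZero}, setting
\[
w_i(x)=\frac{1}{\lambda_i}\,u_i\!\left(\exp_{x_i}(\lambda_i^{-(q-1)/4}x)\right)
\]
on the Euclidean ball of radius $\lambda_i^{(q-1)/4}r$ (with $r$ less than the injectivity radius), and let $g_0$ denote the pulled-back and rescaled metric, which $C^k_{\mathrm{loc}}$-converges to the Euclidean metric. Then $w_i$ satisfies
\[
P^{g_0}_{(\widetilde{\alpha}_i,\widetilde{\beta}_i)}w_i=w_i^q,\qquad \widetilde{\alpha}_i=\alpha_i\lambda_i^{-(q-1)/2},\ \widetilde{\beta}_i=\beta_i\lambda_i^{-(q-1)},
\]
with $\widetilde{\alpha}_i,\widetilde{\beta}_i\to 0$ because $\alpha_i,\beta_i$ are bounded and $q>1$, and with $w_i(0)=1=\|w_i\|_\infty$. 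Factoring this operator as $(-\Delta_{g_0}+c_1(i))(-\Delta_{g_0}+c_2(i))$ with $c_j(i)\to 0$, elliptic regularity applied twice gives uniform local $C^{4,\alpha}$ bounds. Extract a $C^4_{\mathrm{loc}}$-subsequential limit $W\in C^4(\mathbb{R}^n)$, which is nonnegative, satisfies $W(0)=1$, and solves $\Delta^2W=W^q$ on $\mathbb{R}^n$. Since $q<p^*$, the Liouville-type classification in \cite{Lin, Wei2} forbids such a $W$. Contradiction, so $\|u_i\|_\infty\leq M$ for some uniform $M$.

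With $u_i$ uniformly bounded in $L^\infty$ (so $u_i^q$ is as well), factor
\[
\bigl(-\Delta+c_1^{(i)}\bigr)\bigl(-\Delta+c_2^{(i)}\bigr)u_i=\beta_iu_i^q,
\]
with $c_j^{(i)}=\tfrac12(\alpha_i\pm\sqrt{\alpha_i^2-4\beta_i})$ bounded away from $0$ and $\infty$, and apply elliptic bootstrap to each second-order factor: setting $v_i=(-\Delta+c_2^{(i)})u_i$ one gets first $v_i\in C^{1,\alpha}$, then $u_i\in C^{3,\alpha}$, then $v_i\in C^{3,\alpha}$, and finally $u_i\in C^{4,\alpha}$, all with uniform bounds. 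By Arzelà--Ascoli, a subsequence converges in $C^4(M)$ to a nonnegative function $u$ solving the limit Paneitz-type equation with parameters $(\alpha,\beta)$.

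Finally, I need to see that the limit $u$ is strictly positive. Lemma \ref{uigoingtozero} gives a uniform positive lower bound $\|u_i\|_\infty\geq (\mu_1(0)/q)^{1/(q-1)}$ when $u_i$ is non-constant (and $u_i\equiv 1$ is trivially bounded below), so $\|u\|_\infty>0$ and $u\not\equiv 0$. Since $c_j>0$, the standard strong maximum principle applied to each factor $(-\Delta+c_j)$ propagates strict positivity from $u\not\equiv 0$, giving $u>0$. Hence $(u,\alpha,\beta)\in S_{\varepsilon,K}$, proving compactness. The main obstacle is the blow-up step, which is exactly where subcriticality $q<p^*$ enters through the Liouville theorem; everything else is standard elliptic machinery, with the factored form of $P_{(\alpha,\beta)}$ (available thanks to $\alpha^2\geq 4\beta$) doing the work.
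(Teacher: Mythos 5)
Your proposal is correct and follows essentially the same route as the paper: blow-up plus the Liouville theorems of Lin and Wei--Xu for the uniform $L^\infty$ bound (this is where $q<p^*$ enters), the factorization $(-\Delta+c_1)(-\Delta+c_2)$ (available since $\alpha^2\geq 4\beta$) for the elliptic bootstrap and $C^{4,\alpha}$ compactness, and a maximum principle applied to each second-order factor to get strict positivity of the limit. The only cosmetic difference is that you invoke Lemma \ref{uigoingtozero} for the uniform positive lower bound on $\|u_i\|_\infty$ (note elements of $S$ solve $P_{(\alpha,\beta)}u=\beta u^q$, so the lemma applies to $\beta_i^{1/(q-1)}u_i$ and the constant picks up a factor depending on $[\varepsilon,K]$), whereas the paper simply integrates the equation over $M$ to get $\|u_i\|_\infty\geq 1$; either way the limit is not identically zero and the argument closes.
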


\begin{proof} Let $w_i$ be a sequence of functions in $S$, $w_i$ is a positive solution of Equation 
\ref{Paneitz} with $\alpha_i , \beta_i \in [\varepsilon , K]$ and $\alpha_i^2 \geq 4\beta_i   $.  Note that this implies that
$\| w_i \|_{\infty} \geq 1$.
If $\| w_i \|_{\infty} \rightarrow \infty$ then  we can 
apply the blow-up technique as in the  proof of the previous theorem to reach a contradiction. Then we can assume
that the sequence is bounded in $L^{\infty}$. As in \ref{Decomposition} the corresponding operator
$P$ splits as the composition of two second order
elliptic operators.  By passing to a subsequence if necessary we can assume that
$\alpha_i \rightarrow \alpha$ and $\beta_i \rightarrow \beta$, for some 
$\alpha , \beta  \in [\varepsilon , K]$ with  $\alpha^2 \geq 4\beta $.
We then see using regular elliptic theory that the sequence is bounded in $C^{4,\theta}$ and therefore
it converges (up to a subsequence) to a nonnegative function $W$  which is a solution of Equation\ref{Paneitz} 

$$P_{\alpha , \beta} W = (-\Delta +c )(-\Delta +d) W = \beta W^q .$$

If we denote by $U=-\Delta W + d W$ then the equation implies that $-\Delta U +cU \geq 0$. This implies that
$U \geq 0$. And then by the maximum principle $W\equiv 0$ or $W$ is strictly positive. Since 
$\| W \|_{\infty} \geq 1$ we have that $W$ is strictly positive, and therefore $W\in S$. 

\end{proof}







\section{Bifurcation}

We consider  a closed Riemannian manifold $(M,g)$ with a proper isoparametric function 
$F:M \rightarrow [t_0 ,t_1]$. As in Section 2, we consider the focal submanifolds $M_0 = F^{-1} (t_0 )$
and $M_1 = F^{-1} (t_1 )$. 
We let ${\bf d}:M \rightarrow [0,D]$ be the distance function to $M_0$, $D=d_g (M_0 , M_1 )$.
We let $\mathcal{B}_4 = \{ \phi \in C^{4,\alpha} ([0,D] ): \phi '(0)  =  \phi ''' (0)=0 = \phi '(D)  = \phi '''(D)\}$,
which is identified with the $C^{4,\alpha}$ functions on $M$ which are $F$-invariant. We let $h(t)$ be the mean curvature of
${\bf d}^{-1} (t)$.

Consider  smooth, positive, increasing  functions ${\bf {a}} , {\bf b} : (0,\infty ) \rightarrow (0,\infty )$. We  will assume that
${\bf a}^2 (t) >4{\bf b} (t)$ for all $t\in (0,\infty )$, 
$\lim_{t \rightarrow 0} {\bf a} (t) =\lim_{t \rightarrow 0} {\bf b} (t) =0$ and
$\lim_{t \rightarrow \infty } {\bf a} (t)  =    \lim_{t \rightarrow \infty} {\bf  b} (t) =\infty$.

Then we define the map  $H: \mathcal{B}_4  \times (0,\infty ) \rightarrow  C^{0,\alpha } ([0,D]$, by

$$H(u,t)=  u '''' + 2h u  ''' +(2h' +h^2 -{\bf a} (t)  ) u '' +(h''+hh ' -{\bf a} (t) h ) u ' + {\bf b} (t)  (u - u^ q) .$$

We look for solutions of the equation $H(u,  t )=0$ with $u>0$. We call $(1, t)$, $t \in (0,\infty )$, the
family of {\it trivial solutions}. A solution $(u,t)$ with $u\neq 1$ is called {\it nontrivial}.

\medskip

The linearization of the equation $H(u,t)=0$ at the constant solution $(1,t)$  is given by 

\begin{equation}
D_u  H (1,t) [v] = v '''' + 2h v  ''' +(2h' +h^2 -{\bf a} (t)  ) v '' +(h''+hh ' -{\bf a} (t) h ) v ' + {\bf b} (t)  (1-q) v =0.
\end{equation}

Let  $L:\mathcal{B}_2 \rightarrow  C^{0,\alpha } ([0,D]$ be given by
$L(u) = u'' +h u'$. The linearized equation at $(1,t)$  is $L^2 (v) -{\bf a}(t)  L(v) +{\bf b}(t) (1-q)v=0$, which is
the eigenvalue equation for $L^2 -{\bf a} (t)  L$. 

The space of solutions of the eigenvalue problem $L(v) = \lambda v$ has at most dimension 1, since 
it is a second order equation, and solutions must verify $v'(0)=0$. Sturm Liouville theory says that $L$ has 
infinite eigenvalues
$0=\lambda_0 > \lambda_1 > \lambda_2 ...\rightarrow  -\infty$. Moreover, if $\varphi_0 =1 , \varphi_1 , \varphi_2 , ...$ are corresponding
nonzero eigenfunctions, $L(\varphi_i ) = \lambda_i \varphi_i $, then $\varphi_i$ has exactly $i$ zeroes in $(0,D)$.
Then we have that $\{ \varphi_i \}$ is a complete set of eigenfunctions for $L^2 -{\bf a} (t) L$, with corresponding
eigenvalues $\mu_i =\lambda_i^2 -{\bf a}(t)  \lambda_i$. 

\medskip

Therefore the linearized equation (at $(1,t)$) has a solution if and only if for some $i$ we have that
$\lambda_i^2 -{\bf a} (t) \lambda_i = {\bf b} (t) (q-1) >0$. We must have
$\lambda_i = (1/2) ({\bf a} (t) \pm \sqrt{ {\bf a}^2 (t) + 4 {\bf b} (t) (q-1)}$. 
And since $\lambda_i $ must be nonpositive we must 
have  $\lambda_i  = (1/2) ({\bf a} (t)  - \sqrt{ {\bf a}^2 (t) + 4 {\bf b} (t) (q-1)} = \phi (t)$.

To apply bicurcation theory we would like that 
$\phi$ is a decreasing function, $\lim_{t\rightarrow 0} \phi (t) =0$, and $\lim_{t\rightarrow \infty} \phi (t) = -\infty$.
Under these conditions for each $i\geq 1$ there exists a unique $t_i   =  t_i ({\bf a}, {\bf b})>0$ such that $\lambda_i = \phi (t_i )$.

We will study bifurcation from the family of trivial solutions $t \mapsto (1,t)$. The point $(1,t^* )$ is called a 
{\it bifurcation point} for the family if it is in the closure of the space of nontrivial solutions.

Note that

$$ D^2_{u,t} H (1,t_i ) [\varphi_i ]= -{\bf a}' (t_i ) L(\varphi_i ) + {\bf b}' (t_i ) (1-q) \varphi_i .$$

Then 

$$ \int  D^2_{u,t} H (1,t_i ) [\varphi_i ]  \ \varphi_i  = \left( -{\bf a}' (t_i )  \lambda_i  + {\bf b}' (t_i ) (1-q) \right) \int \varphi_i^2 .$$

Since $T := D_u H (1, t_i)[ \cdot]$  is self-adjoint and $ker(T) = span( \varphi_i )$ the last computation implies that
if $-{\bf a}' (t_i )  \lambda_i  + {\bf b}' (t_i ) (1-q) \neq 0$ then

$$D^2_{u,t} H (1, t_i )[\varphi_i  ] \notin  Range(T).$$

Then the following result follows from the previous discussion and the classical Local Bifurcation Theorem for simple eigenvalues of M. G. Crandall and P.  Rabinowitz 
\cite{Crandall}:

\begin{proposition}\label{LocalBifurcation} Assume that ${\bf a}^2 (t) >4{\bf b} (t)$ for all $t\in (0,\infty )$, 
$\lim_{t \rightarrow 0} {\bf a} (t) =\lim_{t \rightarrow 0} {\bf b} (t) =0$ and
$\lim_{t \rightarrow \infty } {\bf a} (t)  =    \lim_{t \rightarrow \infty} {\bf  b} (t) =\infty$. Assume also that 
$\phi (t) =(1/2) ({\bf a} (t)  - \sqrt{ {\bf a}^2 (t) + 4 {\bf b} (t) (q-1)}$ is decreasing,
$\lim_{t\rightarrow 0} \phi (t) =0$, and $\lim_{t\rightarrow \infty} \phi (t) = -\infty$.
Let $0<t_1 < t_2 <...\rightarrow \infty$ be the values such that $\lambda_i = \phi (t_i )$. 
Assume that for each positive integer $i$, $-{\bf a}' (t_i ) \lambda_i + {\bf b}' (t_i ) (1-q) \neq 0$.
Then $\{ (1, t_i ) \}$ is the set  of
bifurcation points of the equation $H(u,t)=0$ at the family of trivial solutions $\{ (1,t) \}$. These bifurcation points are simple, $\varphi_i$ is a solution of the
linearized equation at $(1, t_i )$. The set of nontrivial solutions around this bifurcation point is given by a path
$s \mapsto (u_i (s), t_i (s))$ with $u_i (s) =1 +s \varphi_i + o(s^2 )$. For each $i$ the eigenfunction
$\varphi_i$ has exactly $i+1$ nondegenerate critical points (counting 0 and $D$), and the same holds
for $u_i (s)$ if $s$ is close enough to 0.

\end{proposition}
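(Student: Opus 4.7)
The strategy is to verify the hypotheses of the Crandall--Rabinowitz bifurcation theorem \cite{Crandall} at each candidate point $(1,t_i)$ and then apply it. The discussion preceding the statement already identifies the key operators: $T:=D_u H(1,t_i) = L^2 - {\bf a}(t_i)L + {\bf b}(t_i)(1-q)\mathrm{Id}$, together with the mixed derivative $D^2_{u,t}H(1,t_i)[\varphi_i] = -{\bf a}'(t_i)\,L\varphi_i + {\bf b}'(t_i)(1-q)\varphi_i$. What remains to check is: (a) $\ker T=\mathrm{span}(\varphi_i)$ is one-dimensional, (b) the transversality condition $D^2_{u,t}H(1,t_i)[\varphi_i]\notin\mathrm{Range}(T)$, and (c) that no $(1,t^*)$ with $t^*\notin\{t_j\}$ is a bifurcation point.

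For (a), any $v\in\ker T$ satisfies $(L-\lambda_+)(L-\lambda_-)v=0$ with $\lambda_\pm=(1/2)({\bf a}(t_i)\pm\sqrt{{\bf a}^2(t_i)+4{\bf b}(t_i)(q-1)})$. Since ${\bf a},{\bf b}>0$ and $q>1$ force $\lambda_+>0$, which lies outside the spectrum of $L$ (all of whose eigenvalues are nonpositive), while $\lambda_-=\lambda_i$ is a simple eigenvalue of $L$ with eigenfunction $\varphi_i$, the kernel is spanned by $\varphi_i$. For (b), $L$ is formally self-adjoint in the weighted $L^2$-pairing coming from its Sturm--Liouville form, so $T$ inherits the same self-adjointness and $\mathrm{Range}(T)=(\ker T)^\perp$; the pairing computed in the excerpt gives
\begin{equation*}
\langle D^2_{u,t}H(1,t_i)[\varphi_i],\varphi_i\rangle=\bigl(-{\bf a}'(t_i)\lambda_i+{\bf b}'(t_i)(1-q)\bigr)\|\varphi_i\|^2\neq 0.
\end{equation*}
Crandall--Rabinowitz then produces the simple local $C^1$ curve $s\mapsto(u_i(s),t_i(s))$ of nontrivial solutions with $u_i(s)=1+s\varphi_i+o(s^2)$. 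For (c), at any $t^*\notin\{t_j\}$ none of the eigenvalues $\mu_j(t^*)=\lambda_j^2-{\bf a}(t^*)\lambda_j+{\bf b}(t^*)(1-q)$ of $T|_{t=t^*}$ vanishes, so $D_u H(1,t^*)$ is invertible and the implicit function theorem excludes bifurcation from $(1,t^*)$.

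The remaining block of work concerns the critical points of $\varphi_i$. Sturm oscillation for the Neumann problem $L\varphi=\lambda\varphi$ gives $\varphi_i$ exactly $i$ zeros in $(0,D)$; together with Rolle and the boundary conditions $\varphi_i'(0)=\varphi_i'(D)=0$, this yields at least $i+1$ critical points. I would upgrade this to exactly $i+1$ by noting that $\psi=\varphi_i'$ solves the second-order linear ODE $\psi''+h\psi'+(h'-\lambda_i)\psi=0$ with Dirichlet data at $0$ and $D$, and Sturm comparison for that auxiliary problem caps the interior zeros of $\psi$ at $i-1$. Nondegeneracy at an interior critical point $t_0$ is immediate: $\varphi_i'(t_0)=\varphi_i''(t_0)=0$ forces $\lambda_i\varphi_i(t_0)=L\varphi_i(t_0)=0$, hence $\varphi_i(t_0)=0$, and then $\varphi_i\equiv 0$ by uniqueness for the second-order ODE. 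At $t=0$ the singularity $h(t)\sim j/t$ (Lemma~\ref{mean-curvature}) yields, in the limit of $L\varphi_i=\lambda_i\varphi_i$, the identity $(1+j)\varphi_i''(0)=\lambda_i\varphi_i(0)$, so $\varphi_i''(0)\neq 0$ iff $\varphi_i(0)\neq 0$; the Frobenius-type regularity at this regular singular point forbids a nontrivial smooth eigenfunction from vanishing at $0$, and $t=D$ is treated identically.

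Finally, because each of the $i+1$ critical points of $\varphi_i$ is nondegenerate, the implicit function theorem applied to the equation $u_i(s)'=0$ in a neighborhood of each produces a unique nearby critical point of $u_i(s)$ for $|s|$ small, which persists in being nondegenerate; conversely, any critical point of $u_i(s)$ must, after rescaling by $s$ and letting $s\to 0$, converge to a critical point of $\varphi_i$, so no extra critical points appear. Thus $u_i(s)$ has exactly $i+1$ nondegenerate critical points for small $s$. The main obstacle I anticipate is the sharp count ``exactly $i+1$'' for $\varphi_i$ together with the endpoint analysis at the regular singular points $0$ and $D$, where the standard Sturm/Rolle arguments must be supplemented by careful asymptotic control of solutions to $L\varphi=\lambda\varphi$ near the singularity of $h$.
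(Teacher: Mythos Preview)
Your proposal is correct and follows the same approach as the paper: the paper's proof is literally the single sentence ``The proof is a direct application of the classical theorem of Crandall and Rabinowitz,'' with the kernel and transversality computations already set out in the discussion preceding the statement, exactly as you reproduce them. You actually supply more than the paper does---the paper states the critical-point count and nondegeneracy for $\varphi_i$ (and its persistence to $u_i(s)$) without argument, treating it as a standard Sturm--Liouville fact, whereas your Rolle/Sturm-comparison sketch (which works because $h'<0$ makes the potential for $\psi=\varphi_i'$ dominated by that for $\varphi_i$) and your endpoint Frobenius analysis fill that gap.
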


The proof is a direct application of the classical theorem of Crandall and Rabinowitz.

\bigskip

We have now all the elements needed to prove our main global bifurcation result:

\begin{theorem} Assume the conditions of the previous proposition.
For each $t > t_i$ the equation $H(u,t)=0$ has at least $i$ distinct solutions with $u$ positive and non-constant, at least one such that $u$ has 
exactly $k$ critical points
for each  $2\leq k \leq i+1$.

\end{theorem}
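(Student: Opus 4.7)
The plan is to apply the Rabinowitz global bifurcation theorem at each bifurcation point $(1, t_j)$, $1\le j\le i$, supplied by Proposition \ref{LocalBifurcation}, producing for each $j$ a connected continuum $\mathcal{C}_j$ of nontrivial positive solutions along which the number of critical points is constantly equal to $j+1$, and whose $t$-projection covers $[t_j,\infty)$.

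First I would show that each $\mathcal{C}_j$ consists of positive solutions (apart from the point $(1,t_j)$ itself). Solutions near $(1,t_j)$ given by Proposition \ref{LocalBifurcation} are $C^{4,\alpha}$-close to $1$, hence positive. If along $\mathcal{C}_j$ a limit solution $u$ touched zero, Lemma \ref{MaximumPrinciple} would force $u\equiv 0$. But the linearization of $H(\,\cdot\,,t)$ at $u=0$ has eigenvalues of the form $\lambda_k^2-\mathbf{a}(t)\lambda_k+\mathbf{b}(t)$, which are strictly positive under the standing assumption $\mathbf{a}^2(t)>4\mathbf{b}(t)$, so no nontrivial branch of positive solutions can accumulate at $u\equiv 0$.

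Next I would prove that the number of critical points is constant along $\mathcal{C}_j$. Near $(1,t_j)$, Proposition \ref{LocalBifurcation} gives solutions with exactly $j+1$ nondegenerate critical points. By Theorem \ref{Nondegenerate}, every nontrivial positive solution on $\mathcal{C}_j$ has only nondegenerate critical points, so their number is locally constant on the branch. By Theorem \ref{Close}, combined with the uniform $C^{4,\alpha}$ estimates of Theorem \ref{Compact} on any strip $t\in[\varepsilon,K]$, the number of critical points is preserved under $C^4$ limits of nonconstant positive solutions. Thus the critical-point count is both open and closed on $\mathcal{C}_j$, and by connectedness it equals $j+1$ throughout.

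This rigidity lets me rule out the ``returning'' alternative of the Rabinowitz theorem: if $\mathcal{C}_j$ met $(1,t_k)$ for some $k\neq j$, local analysis via Proposition \ref{LocalBifurcation} would produce, arbitrarily close to $(1,t_k)$ on $\mathcal{C}_j$, solutions with $k+1\neq j+1$ critical points, contradicting the previous step. Hence $\mathcal{C}_j$ is unbounded in $\mathcal{B}_4\times(0,\infty)$. Theorem \ref{CloseToZero}, applied with $\alpha=\mathbf{a}(t)$, $\beta=\mathbf{b}(t)$, and the fact that $\mathbf{a}(t),\mathbf{b}(t)\to 0$ as $t\to 0$, rules out nontrivial positive solutions for $t$ near $0$, so the $t$-projection of $\mathcal{C}_j$ is bounded away from $0$. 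Theorem \ref{Compact} provides $C^{4,\alpha}$-compactness on any $\{t\in[\varepsilon,K]\}$, so $\mathcal{C}_j$ cannot be unbounded in the $u$-direction with $t$ confined to a compact interval. Therefore the $t$-projection must be unbounded above; being a connected subset of $(0,\infty)$ containing $t_j$, it contains $[t_j,\infty)$.

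Finally, given $t>t_i$, for each $1\le j\le i$ we have $t_j\le t_i<t$, so there exists a solution $(u_j,t)\in\mathcal{C}_j$ with $u_j>0$, $u_j\not\equiv 1$, and exactly $j+1$ critical points. Setting $k=j+1$ ranges over $\{2,\dots,i+1\}$ and yields $i$ distinct solutions, distinguished by the number of critical points. The main obstacle is the second step: upgrading the pointwise nondegeneracy of Theorem \ref{Nondegenerate} and the limit stability of Theorem \ref{Close} into the assertion that the critical-point count is a locally constant, continuous invariant on $\mathcal{C}_j$. This rigidity, which is unavailable for general fourth-order equations and depends crucially on the structural hypotheses $\mathbf{a}^2>4\mathbf{b}$ and the antisymmetry of $h$, is what forces the branches to remain separated and to extend across the entire semi-infinite parameter range.
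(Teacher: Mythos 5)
Your proposal is correct and follows essentially the same route as the paper's proof: constancy of the (nondegenerate) critical-point count along each continuum via Theorem \ref{Nondegenerate} and Theorem \ref{Close}, exclusion of the other bifurcation points via Proposition \ref{LocalBifurcation}, non-compactness from Rabinowitz, and Theorems \ref{CloseToZero} and \ref{Compact} to force the $t$-projection of each branch to cover $[t_j,\infty)$. The only cosmetic differences are that you add an implicit-function argument at $u\equiv 0$ for positivity and assert constancy of the count before ruling out returns to the trivial branch, whereas the paper folds the constant-limit possibility directly into the closedness step by observing $(1,t_k)\notin\overline{S_i^0}$ for $k\neq i$; both orderings yield the same argument.
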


\begin{proof} Consider the paths of local nontrivial solutions bifurcating at $(1, t_i )$, from Proposition \ref{LocalBifurcation}. Let
$S_i$ be the connected component, in the closure of the space of non-trivial solutions of the equation, containing the path. Let $S_i^0
\subset S_i$ be the subset of solutions which have exactly $i+1$ critical points. Note that non-constant solutions close to the bifurcation 
point $(1,t_i )$ are in $S_i^0$. It follows from Lemma \ref{Nondegenerate} that all the critical points of solutions are
nondegenerate and this implies that $S_i^0$ is open in $S_i$. 
It also follows from Proposition \ref{LocalBifurcation} that if $j\neq i$ then $(1,t_j ) \notin \overline{S_i^0}$.
It follows from Theorem \ref{Close} that 
$S_i^0 \cup \{ (1,t_i ) \}$ is closed in $S_i$ and therefore $S_i = S_i^0 \cup \{ (1,t_i ) \}$. Then all solutions on $S_i$
have exactly $i+1$ critical points. This implies that if $i\neq j$ we have that $S_i \cap S_j =\emptyset$.
Then we know from the Global Bifurcation Theorem of P. Rabinowitz \cite{Rabinowitz} that $S_i$ is not compact. 
It follows from Theorem \ref{CloseToZero}, that there exists $\varepsilon >0$ such that $S_i \subset C^{4,\alpha} [0,D]
 \times [\varepsilon , \infty )$. If there exists $t>t_i$ such that $S_i \cap C^{4,\alpha} [0,D] \times \{ t \} =
\emptyset$ then we would have that $S_i \subset  C^{4,\alpha} [0,D]
 \times [\varepsilon , t )$. Then it would follow from Theorem \ref{Compact} that $S_i$ is compact. Therefore
$S_i$ intersects $ C^{4,\alpha} [0,D] \times \{ t \} $ for each $t>t_i$ and the theorem follows.

\end{proof}

\begin{remark}
If we consider the Equation \ref{PaneitzODE} for some fixed positive values of $\alpha , \beta$ which verify $\alpha^2 >4\beta$, then we
can consider functions  ${\bf {a}} , {\bf b} : (0,\infty ) \rightarrow (0,\infty )$ satisfying the conditions in this section, and such that for some 
$s_0 \in (0,\infty )$,
 ${\bf {a}} (s_0 )= \alpha $ and  $ {\bf b} (s_0 )=\beta$. 

Then we can apply the previous theorem and if $\alpha , \beta$ verify that
 $(1/2) (\alpha  - \sqrt{\alpha^2  + 4\beta  (q-1)} )< -\lambda_i$ then Equation \ref{PaneitzODE} has at least $i$ nontrivial positive solutions.

For instance we can consider ${\bf a} (t) = t$ and ${\bf b}(t) = ct^2$, with $c\in (0,1/4)$. These clearly verify the  conditions
${\bf a}^2 (t) >4{\bf b} (t)$ for all $T\in (0,\infty )$, 
$\lim_{t \rightarrow 0} {\bf a} (t) =\lim_{t \rightarrow 0} {\bf b} (t) =0$ and
$\lim_{t \rightarrow \infty } {\bf a} (t)  =    \lim_{t \rightarrow \infty} {\bf  b} (t) =\infty$.
And $\phi (t) = t/2 (1-\sqrt{1+4c(q-1)})$ is a linear decreasing function,
$\lim_{t\rightarrow 0} \phi (t) =0$, and $\lim_{t\rightarrow \infty} \phi (t) = -\infty$. In this case we can compute explicitly $t_i =
2\lambda_i  \left(  1-\sqrt{1+4c(q-1)} \right)^{-1}$. 
Then 

$$-{\bf a}' (t_i ) \lambda_i + {\bf b}' (t_i ) (1-q) =\lambda_i  \left( -1+4c (1-q) \left( 1-\sqrt{1+4c(q-1)} \right)^{-1} \right) .$$

By a direct computation we see that this expression does not vanish (when $c>0$, $q>1$). Therefore for any
given $\alpha$, $\beta$ satisfying $\alpha^2 > 4\beta$ we can choose $c \in (0,1/4)$ so that $\beta = c\alpha^2$, and we
can apply the theorem.

\end{remark}

\section{Appendix}

Let 

$$\alpha =\frac{N^2-4N+8}{2(N-1)(N-2)} (n\Lambda_0 + m\Lambda_1 ) -  \frac{4}{N-2} \Lambda_0$$

\noindent  
and 

$$\beta =\frac{N-4}{2}  \left(   \frac{-2}{(N-2)^2} \left( n\Lambda_0^2  + m\Lambda_1^2  \right)  + 
\frac{N^3 -4N^2 +16N-16}{8(N-1)^2 (N-2)^2} \left( n\Lambda_0 + m \Lambda_1 \right)^2   \right) .$$

We assume that $m,n\geq 3$, $N=n+m \geq 6$, and that $\Lambda_1 \geq \Lambda_0 >0$. 
We prove here the claim given in the introduction that  under these 
conditions $\alpha^2 -4\beta >0$.

\medskip

We consider $n,m$ fixed and consider $\alpha =\alpha (\Lambda_0 , \Lambda_1 )$ and $\beta = \beta (\Lambda_0 , \Lambda_1 )$ 
as functions of $\Lambda_0$ and
 $\Lambda_1$. Let $h(t) = \alpha (\Lambda_0 , t)^2 -4\beta (\Lambda_0 , t)$. We want to
prove that $h(t) >0$ if $t\geq  \Lambda_0$.

We first compute  

$$h(\Lambda_0 )=  \Lambda_0^2  \left(           \frac{N^2-4N+8}{2(N-1)(N-2)} N -  \frac{4}{N-2}            \right)^2
-2 \Lambda_0^2 (N-4) \left(
 \frac{-2}{(N-2)^2}  N + 
\frac{N^3 -4N^2 +16N-16}{8(N-1)^2 (N-2)^2} N^2 \right) $$

$$=\frac{\Lambda_0^2}{4(N-1)^2 (N-2)^2}  \left(  (N^3-4N^2 +8N -8(N-1) )^2 -(N-4)( N^5 -4N^4  +16N^2 -16N ) \right) .$$

$$=\frac{\Lambda_0^2}{4(N-1)^2 (N-2)^2}  ( (N^3 -4N^2 +8)^2 -  N^6  +8N^5 -16N^4  -16N^3 +80N^2  -64N) $$

$$=\frac{\Lambda_0^2}{4(N-1)^2 (N-2)^2} (16N^2 -64N +64) >0 . $$

Also:

$$h' (\Lambda_0 )= 2 m \Lambda_0 \left (   \frac{N^2-4N+8}{2(N-1)(N-2)} N   -  \frac{4}{N-2}  \right)   \frac{N^2-4N+8}{2(N-1)(N-2)} $$

$$ -2m\Lambda_0(N-4) \left( \frac{-4}{(N-2)^2}   + \frac{N^3 -4N^2 +16N-16}{4(N-1)^2 (N-2)^2} N \right) $$


$$= \frac{m\Lambda_0}{2(N-1)^2 (N-2)^2}  (8N^3  - 40N^2    +48N  ) >0.$$

\medskip

Note that $h(t)$ is a quadratic polynomial on $t$. The second order coefficient is:

$$(1/2) h''(t) = \left(  \frac{N^2-4N+8}{2(N-1)(N-2)} \right)^2 m^2  -2(N-4)  \left( \frac{-2m}{(N-2)^2}  + \frac{N^3 -4N^2 +16N-16}{8(N-1)^2 (N-2)^2} m^2  \right)  $$


$$=\frac{m}{4(N-1)^2 (N-2)^2} ( 16Nm  +16(N-4)(N-1)^2 )  >0.$$

The three inequalities clearly imply the claim.

\end{document}